\long\def\skipit#1{} 
\def\={\,=\,}
\def\+{\,+\,}
\def\-{\,-\,}
\renewcommand{\today}{\number\year-0\number\month-\number\day  }
\newcounter{hours}
\newcounter{minutes}
\newcommand{\printtime}{
	\setcounter{hours}{\time/60}%
	\setcounter{minutes}{\time-\value{hours}*60}
	\ifthenelse{\value{hours}<10}{0}{}\thehours:%
	\ifthenelse{\value{minutes}<10}{0}{}\theminutes}
\numberwithin{equation}{section}
\numberwithin{figure}{section}
\numberwithin{table}{section}
\newtheorem{thm}{Theorem}[section]
\newtheorem{lemma}[thm]{Lemma}
\newtheorem{prop}[thm]{Proposition}
\newtheorem{J-com}{JG-comment}[section]
\theoremstyle{definition}
\newtheorem{example}{Example}[section]
\newtheorem{defn}{Definition}[section]
\newtheorem{property}{Property}[section]
\newtheorem{rem}[thm]{Remark}
\keywords{hypergraph, hypermap, hypertree, partial duality, partial-dual Euler-genus polynomial}
\subjclass{Primary: 05C10}
\begin{document}

\title[Enumerating Partial Duals of Hypermaps by Genus]{Enumerating Partial Duals of Hypermaps by Genus}

\author{Wenwen Liu}
\address{School of Mathematics and Physics, SuZhou University of Science and Technolgy, 215009 SuZhou, China}
\email{1991807694@qq.com}
\author{Yichao Chen}
\address{School of Mathematics and Physics, SuZhou University of Science and Technolgy, 215009 SuZhou, China}
\email{ycchen@hnu.edu.cn}

\begin{abstract} The concept of partial duality in hypermaps was introduced by Chmutov and Vignes-Tourneret, and  Smith independently. This notion serves as a generalization of the concept of partial duality found in maps.
In this paper, we first present an Euler-genus formula concerning the partial duality of hypermaps, which serves as an invariant related to the result obtained by Chmutov and Vignes-Tourneret. This formulation also generalizes the result of Gross, Mansour, and Tucker regarding partial duality in maps. Subsequently, we enumerate the distribution of partial dual Euler-genus for hypermaps and compute the corresponding polynomial for specific classes of hypermaps through three operations: join, bar-amalgamation, and subdivision.

 \end{abstract}

\maketitle

\bigskip
\section{Introduction}  


\subsection{ Background}

{Let  $H=(V(H),E(H))$ denote a \textit{hypergraph}, where $V(H)$  represents the vertex set and $E(H)$ denotes the hyperedge set. Each hyperedge is defined as a nonempty subset of vertices. If $e\in E(H)$, and if $v_{1},v_{2},\ldots,v_{i} \in e$, we say that the vertices $v_{1},v_{2},\ldots,v_{i}$ are \textit{adjacent} to each other. Furthermore, we state that the vertex $v_{1}$ (or any of the vertices $v_{2},v_{3},\ldots,v_{i}$) is incident with the hyperedge $e$. Given two distinct hyperedges  $e_{1},e_{2}$, if $e_{1}\cap e_{2}\neq\emptyset$, we say that $e_{1}$ and $e_{2}$  are adjacent to each other.  The \textit{degree of a vertex} $v_{i}\in V(H)$, denoted as  $d_{i}$, is defined as the number of hyperedges with which $v_{i}$ is adjacent.    The \textit{degree of a hyperedge} $e_{i}\in E(H)$, donated as $n_{i}$,  refers to the number of vertices contained within that hyperedge.   For all hyperedges  $e\in E(H)$, if the degree of an edge equals $r(r\geq 2)$, then the hypergraph is referred to as an $r$-\textit{uniform hypergraph}. It is evident that a graph qualifies as a $2$-uniform hypergraph.}

A \textit{chain of length $q$ in a hypergraph $H$ }is a sequence $v_{1}e_{1}v_{2}e_{2}v_{3}\cdots v_{q}e_{q}v_{q+1}$, where $v_{1},v_{2},\ldots,$ $v_{q},v_{q+1}\in V(H)$, $e_{1},e_{2},\ldots,e_{q}\in E(H),$ such that

(1) All the vertices $v_{1},v_{2},\ldots,v_{q},v_{q+1}$, with the exception of  $v_{1}$ and $v_{q+1}$, are distinct;

(2) All the hyperedges $e_{1},e_{2},\ldots,e_{q}$ are distinct;

(3) $v_{i}, v_{i+1} \in e_{i}$ for all $i = 1, 2, 3, \ldots, q$.


{If $v_{1} \neq v_{q+1}$, the chain $v_{1}e_{1}v_{2}e_{2}v_{3}\cdots v_{q}e_{q}v_{q+1}$ is referred to as a \textit{path of length $q$}; if $q \geq 2$ and $v_1 = v_{q+1}$, the chain is termed a \textit{cycle of length $q$ }. Therefore, a hypergraph is considered connected if there exists a path connecting any two vertices within the hypergraph $H$.}

{Every connected hypergraph is associated with a connected bipartite graph, as established by Walsh {\cite{Wa75}}. For a given connected hypergraph $H$, we can construct the corresponding bipartite graph $G_H$ such that $V(G_H)=V(H)\cup E(H)$ and $E(G_H)=\{\{v,e\}|v\in V(H),e\in E(H)\; and\;v\in e \}$, as illustrated in Figure \ref{p1}.}

\begin{figure*}[ht]
    \centering
    \includegraphics[width=4in]{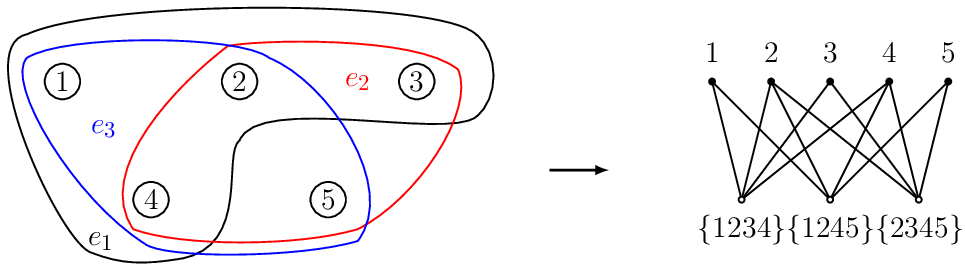}

\caption{A hypergraph $H$ and its associated bipartite graph $G_H$.}
    \label{p1}
\end{figure*}


{A (closed) \textit{surface} in this paper refers to a compact 2-manifold without boundary, denoted as $S_{n}$, where $n$ indicates the number of handles or crosscaps present in the surface. This value represents the genus of the surface. Additionally, a hypermap can be understood as a hypergraph that is embedded on a surface, serving as an extension of the concept of a map (or a graph embedding). In this paper, we primarily concentrate on the concepts of partial duality and Euler-genus polynomials related to hypermaps.}

{In 2009, Chmutov generalized the geometric duality of maps and introduced the concept of partial duality for maps in \cite{C09}. For any subset $A$ of edges of a map $M$, a partial dual $M^{A}$  represents a geometric duality of $M$ with respect to $A$. In 2020, Gross, Mansour and Tucker introduced the partial-dual Euler-genus polynomial for maps in {\cite{GMT20}}, demonstrating that the partial-dual genus polynomials for all orientable maps are interpolating.}

{Recently, Chmutov and Vignes-Tourneret  {\cite{CV22}} and Simith \cite{Smi18} independently   defined the notion of partial duality for hypermaps. In  {\cite{CV22}}, Chmutov and Vignes-Tourneret provided a formula to describe the change in genus under partial duality. The authors have advocated for a deeper investigation into the polynomials related to the partial duality of hypermaps. The objective of this paper is to address this matter.}

{This paper is organized as follows. In Section 2, we use a combinatorial model developed by Tutte to define the concept of partial duality in hypermaps and introduce the partial-dual Euler-genus polynomial associated with these structures. Sections 3 and 4 present three operations: \textit{join}, \textit{bar-amalgamation}, and \textit{subdivision}, along with their corresponding Euler-genus polynomials for partial-duals. Finally, we explore the properties of hypertree maps and illustrate these concepts through relevant examples.}

\subsection{{Bi-rotation system}}
Here we introduce Tutte's permutation axiomatization for maps \cite{Tut84}.  A \textit{rotation at a vertex} $v$ of a graph $G$ refers to the cyclic ordering of the edge-ends incident at $v$. A \textit{rotation system} $R$ of a graph $G$ is defined as an assignment of a rotation at every vertex of $G.$ An embedding of a graph $G$ on an arbitrary surface $S$ can be described combinatorially by a \textit{signed rotation system} $(R,\sigma)$. Here, $R$ represents the rotation system of $G$, and $\sigma$ is the \textit{twist-indicator} \cite{CG18}.  Specifically, if $\sigma(e)=-1$, then the edge $e$ is twisted; otherwise, if $\sigma(e)=1$, then the edge $e$ is untwisted. For each edge \( e_{i}\) in \( G \) is labeled with four integers: \( 4i-3, 4i-2, 4i-1, 4i \). Here, the labels \( 4i-3 \) and \( 4i-2 \) are associated with the neighborhood of vertex \( u \), while labels \( 4i-1 \) and \( 4i \) correspond to the neighborhood of vertex \( v\), where vertices \( u \) and \( v\) are the endpoints of edge \( e_{i} \). The labels on the left side of edge \( e_{i} \) are denoted as \( 4i-3\) and \( 4i-1\), whereas those on the opposite side are labeled as $4i-2$ and $4i$. Consequently, if the edge \( e_{i} \) is untwisted, then we let $e_{i}=(4i-3,4i-1)(4i-2,4i)$, otherwise we let $e_{i}=(4i-3,4i)(4i-2,4i-1).$  Let \( B \) represent the set of all labels associated with the edges in graph \( G \). It is evident that the cardinality of \( B \) is equal to \( 4e(G) \), where \( e(G) \) denotes the number of edges in \( G \). The product of the cyclic permutations corresponding to all edges is denoted as
\(
\psi =\prod\limits_{i=1}^{e(G)}e_i.
\)


For every vertex \( u \) in the graph \( G \), the number of labels in the neighborhood of \( u \) is given by \( 2d(u) \). Let the cyclic permutation \( R^{l} \) consist of all labels on the left side of an edge arranged in a clockwise rotation, while \( R^{r} \) comprises all right-side labels arranged in a counterclockwise rotation. The \textit{bi-rotation} associated with vertex \( u \) is defined as \( R^{l} \circ R^{r} \).

The union of all bi-rotations for each vertex forms what we refer to as the \textit{bi-rotation system} of graph \( G\), denoted by \( \tau\). Consequently, an embedding of graph \( G\), represented as a map \( M\), can be viewed as a triple \( M = (B, \tau, \psi)\),
where the composition \( \psi\circ\tau\) represents the product of all bi-rotations corresponding to each face within embedding \( M\).


\begin{defn}
Given a map \( M = (B, \tau, \psi) \), let \( D \) be a nonempty subset of vertices. The restriction of \( \tau \) to the set \( D \), denoted as \( \tau|_{D} \), is defined by removing all labels in \( \tau \) that are not contained within \( D\).
\end{defn}

\subsection{{The bipartite graph model for hypermaps}}  

{A hypergraph can be represented as a bipartite graph; therefore, we can define a hypermap through an embedding of the corresponding bipartite graph.}
\begin{defn}
\label{def2}Let \( M_{H} = (B, \tau, \gamma) \) be an embedding of the corresponding bipartite graph, and \( D = V(H) \). A hypermap can be defined as follows:

(1) \( \tau(V(H)) = \tau|_{D} \);

(2) \( (\psi\circ\tau)|_{D} \) represents the product of all bi-rotations of the faces of \( H \);

(3) \( \psi(E(H)) = (\psi\circ\tau)|_{D} \circ (\tau|_{D})^{-1} \).\\
Similarly, a hypermap can be also viewed as a triple $\mathcal{H}=(B|_{D},\tau(V(H)),\psi(E(H)))$
\begin{rem}
In summary, provided that no contradictions arise from this process, a hypermap may still be denoted as \( \mathcal{H} = (B, \tau, \psi) \).

Next, there is no distinction between hyergraph and hypermap in this paper, and their symbols are uniformly denoted as $H$.
\end{rem}
\end{defn}
Figure \ref{p2} and Figure \ref{p3} illustrate the processes of obtaining a plane embedding and a toroidal embedding of the hypergraph $H$ shown in Figure \ref{p1}, using the aforementioned rules.

\begin{example}
\begin{figure}[H]
    \centering
    \includegraphics[width=5in]{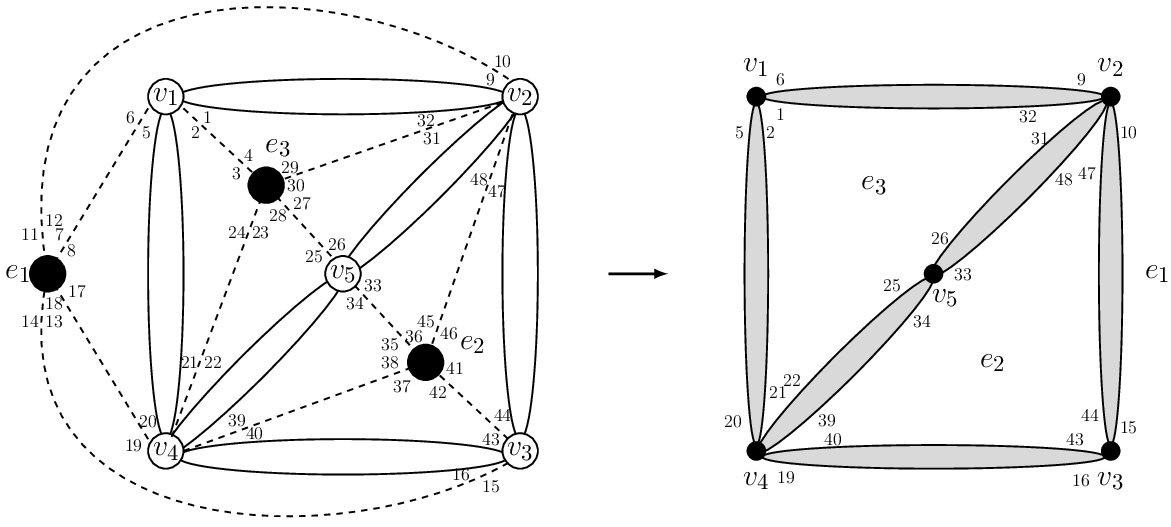}
    \caption{A bi-rotation system of a planar hypermap.}
    \label{p2}
\end{figure}
In Figure $\ref{p2}$, we define the bipartite map $M_{H}=(B,\tau,\psi)$, where
\begin{align*}
B&=\{1,2,3,\ldots,45,46,47,48\},\\
\tau&=(1,5)(2,6)(9,47,31)(10,32,48)(15,43)(16,44)(19,21,39)(20,40,22)(25,33)(26,34)\\ &\ \ \ \ (3,29,27,23)(4,24,28,30)(35,45,41,37)(36,38,42,46)(7,17,13,11)(8,12,14,18).\\
\psi&=(1,3)(2,4)(5,7)(6,8)(9,11)(10,12)(13,15)(14,16)(17,19)(18,20)(21,23)(22,24)(25,27)\\ &\ \ \ \ (26,28)(29,31)(30,32)(33,35)(34,36)(37,39)(38,40)(41,43)(42,44)(45,47)(46,48).
\end{align*}

By Definition $\ref{def2}$, we can construct a hypermap $H=(B|_{D},\tau(V(H)),\psi(E(H)))$ in the following manner. Here
\begin{align*}
B|_{D}&=\{1,2,5,6,9,10,15,16,19,20,21,22,25,26,31,32,33,34,39,40,43,44,47,48\},\\
\tau(V(H))&=(1,5)(2,6)(9,47,31)(10,32,48)(15,43)(16,44)(19,21,39)(20,40,22)(25,33)(26,34),\\
\psi(E(H))&=(1,31,25,21)(2,22,26,32)(5,19,15,9)(6,10,16,20)(33,47,43,39)(34,40,44,48).
\end{align*}

\end{example}

\begin{figure}[H]
    \centering
    \includegraphics[width=2.6in]{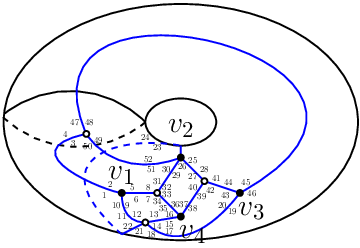}
    \includegraphics[width=2.6in]{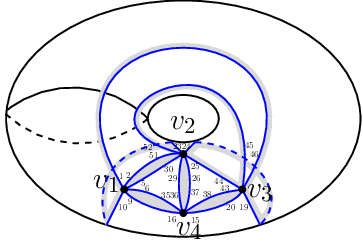}
    \caption{A hypermap on torus.}
    \label{p3}
\end{figure}

\begin{example}
In Figure $\ref{p3}$, we present an embedding of a bipartite graph on a torus, denoted as $M_{H}=(B,\tau,\psi)$, where
\begin{align*}
B&=\{1,2,3,\ldots,48,49,50,51,52\},\\
\tau&=(1,5,9)(2,10,6)(3,47,49)(4,50,48)(7,31,33)(8,34,32)(11,13,17,21)(12,22,18,14)\\& \ \ \ (15,35,37)(16,38,36)(19,43,45)(20,46,44)(23,25,29,51)(24,52,30,26)(27,41,39)\\ & \ \ \ (28,40,42),\\
\psi&=(1,3)(2,4)(5,7)(6,8)(9,11)(10,12)(13,15)(14,16)(17,19)(18,20)(21,23)(22,24)\\
&\ \ \ (25,27)(26,28)(29,31)(30,32)(33,35)(34,36)(37,39)(38,40)(41,43)(42,44)(45,47)\\ & \ \ \ (46,48)(49,51)(50,52).
\end{align*}Then, a hypermap \( H = (B|_{D}, \tau(V(H)), \psi(E(H))) \) is derived from the bipartite map \( M_{H} = (B, \tau, \psi) \) as follows. Here
\begin{align*}
 B|_{D}&=\{1,2,5,6,9,10,15,16,19,20,23,24,25,26,29,30,35,36,37,38,43,44,45,46,51,52\}\\
\tau(V(H))&=(1,5,9)(2,10,6)(15,35,37)(16,38,36)(19,43,45)(20,46,44)(23,25,29,51)\\ &\ \ \ (24,52,30,26)\\
\psi(E(H))&=(1,45,51)(2,52,46)(5,29,35)(6,36,30)(9,15,19,23)(10,24,20,16)(25,43,37)\\ &\ \ \ (26,38,44)
\end{align*}
\end{example}
\subsection{The arrow presentation of a hypermap $H$}
A hypermap can be represented as a ribbon graph. Specifically, we can replace the vertices, hyperedges, and faces with vertex-disks, hyperedge-disks, and face-disks such that disks of the same type do not intersect, while disks of different types may only intersect at boundary arcs.

\begin{figure}[H]
    \centering

        \includegraphics[width=2in]{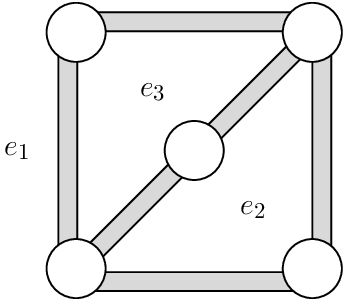}
        \includegraphics[width=2.45in]{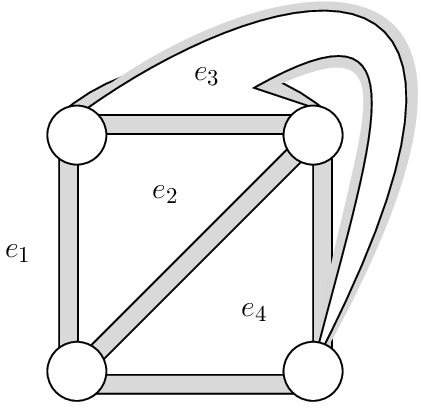}

    \caption{ Hypermaps as ribbon graphs.}
    \label{p5}
\end{figure}

\begin{figure}[H]
    \centering
        \includegraphics[width=2.6in]{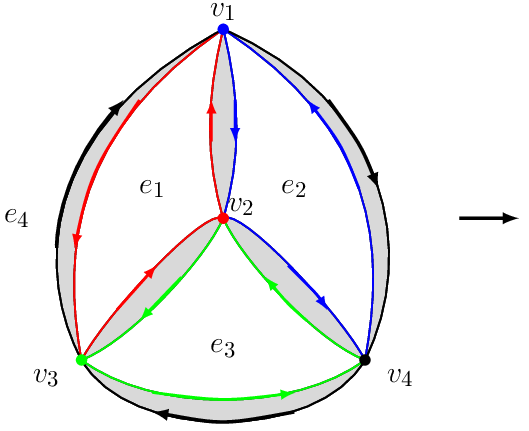}
        \includegraphics[width=2in]{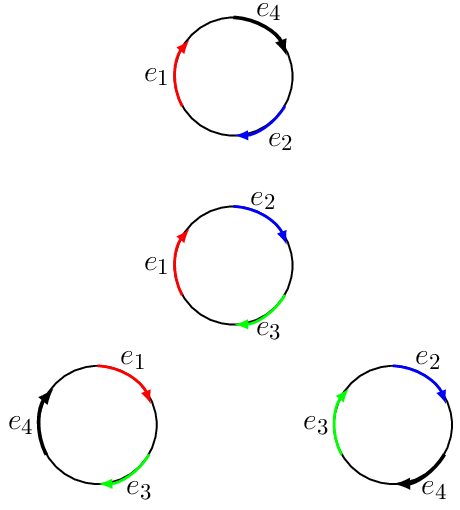}
    \caption{The arrow presentation of a hypermap.}
    \label{p6}
\end{figure}
The \textit{arrow presentation} of a hypermap $H$, denoted $\vec{H}$, consists of a set of the vertex-disks in $H$ with labelled arrows, called \textit{marking arrows}.  These marking arrows represent the common segments on the vertex-disks and are labeled according to the hyperedges they intersect. The direction of these arrows aligns with the boundaries of their corresponding hyperedges.  Obviously, the number of marking arrows labelled $e_{i}$ is $n_{i}$. This process is illustrated in Figure \ref{p6}.


\subsection{The Euler-characteristic of a hypermap}
\begin{defn}
\label{def3}
Given a hypermap $H = (B, \tau, \psi)$, let $v(H),e(H), f(H)$ represent the number of vertices, hyperedges and faces of $H$, respectively. For each hyperedge $e_{i}\in E(H)$, let $n_i$ be the number of vertices incident to $e_{i}$, then the Euler-characteristic of the hypermap $H$ is given by
$$\chi(H)=v(H)+e(H)+f(H)-\sum\limits_{i=1}^{e(H)}n_{i}.$$
\end{defn}

\begin{rem}
We provide a comprehensive analysis of the previously mentioned definition
The terms $v(G_H)$, $e(G_H)$, and $f(G_H)$ denote the number of vertices, edges, and faces in the bipartite graph $G_H$, respectively. By definition \ref{def2}, we have $\chi(H) = \chi(G_H)$ and $f(H) = f(G_H)$. Letting $n_{i}$ represent the number of vertices incident to edge $e_{i} \in E(H)$ (the number of vertices contained in edge $e_{i}$), we find that \(E(G_H)=\{\{v,e\}|v\in V(H), e\in E(H), v\in e\}\). Consequently, \(e(G_H)=\sum\limits_{i=1}^{e(H)} n_{i}\). Thus, $\chi(H) = \chi(G_H) = v(G_H) + f(G_H) - e(G_H) = v(H) + e(H) + f(H) - \sum\limits_{i=1}^{e(H)} n_{i}.$
\end{rem}

 \section{The partial duality for a hypermap}
In this paper, we focus exclusively on the partial duality of hyperedges. However, this definition can be easily extended to encompass vertex and face partial duality due to the inherent symmetry of hypermaps, as discussed in \cite{{CV22}}.

%
%

Given a subset of hyperedges $A\subseteq E(H)$, we consistently regard $A$ as a spanning sub-hypermap, which we will continue to denote as  $A$. Consequently, it follows that $v(A)=v(A^{c})=v(H)$, where $A^{c}=E(H)-A$.
%
%
%
%
%
%
%
%

\begin{defn}
Given a hypermap \( H = (B, \tau, \psi) \), for any subset of hyperedges \( A \), the partial-dual $H^{A}$ of the hypermap \( H \) with respect to \( A \) is defined as follows:

(1) $\tau(H^{A})=\psi|_{A}\circ\tau$;

(2) $\psi(H^{A})=\psi$.
\end{defn}
Obviously, according to the definition of partial duality, we can derive the following properties.
\begin{property}\cite{CV22}
Given a connected hypermap \( H \) and a subset of hyperedges \( A \subseteq E(H) \), the partial dual of \( H \) with respect to \( A \) exhibits the following properties:
\[
(1)\;v(H^{A})=f(A);\indent\indent (2)\;e(H^{A})=e(H);\indent\indent (3)\;f(H^{A})=v((H^{A})^{*}).
\]
\end{property}
\begin{proof}

The expression \( v(H^{A}) = \frac{\|\tau(H^{A})\|}{2} = f(A) \) holds true, as the composition \( \psi|_{A} \circ \tau \) represents the product of all bi-rotations across all faces on \( A \). Here, \( \|\circ\| \) denotes the number of orbits within the permutation.

In particular, when \( A = E(H) \), it follows that \( \tau(H^{A}) = \psi \circ\tau\). Consequently, we have \( f(H^{A}) = v([H^{A}]^{*})\).
\end{proof}

\begin{example}

In Figure \ref{p7}, let \( A = \{e_{1}\} \).   Given that
$$\psi=(1,5,19)(4,18,8)(3,11,21)(2,24,10)(7,9,15)(6,14,12)(13,23,17)(16,20,22)$$ and
$$\tau=(1,17,21)(2,22,18)(7,13,19)(8,20,14)(3,9,5)(4,6,10)(11,23,15)(12,16,24),$$ we find that
$$\psi|_{A}=(1,5,19)(4,18,8)$$ and
$$\tau(H^{A})=\psi|_{A}\circ\tau=(1,3,9,5,7,13,19,17,21)(2,22,18,20,14,8,6,10,4)(11,23,15)(12,16,24).$$
\begin{figure}[H]
    \centering
    \includegraphics[width=4in]{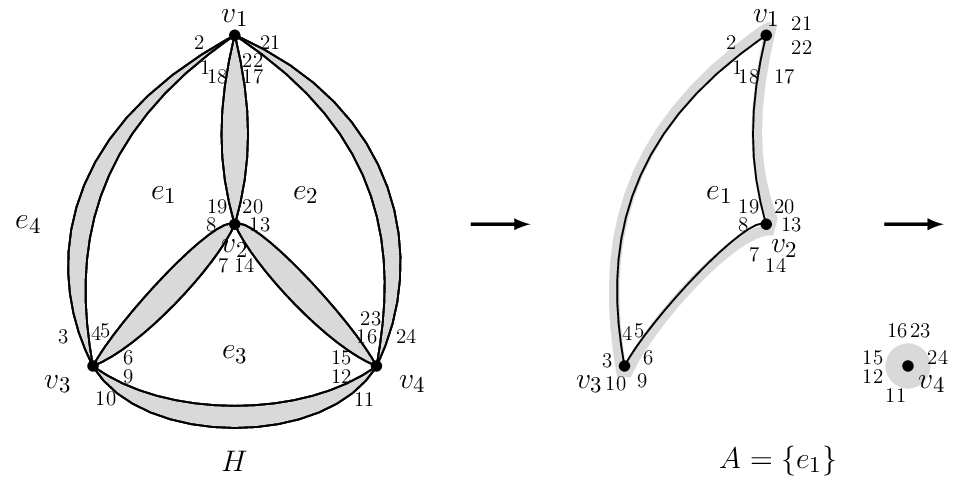}
    \includegraphics[width=2in]{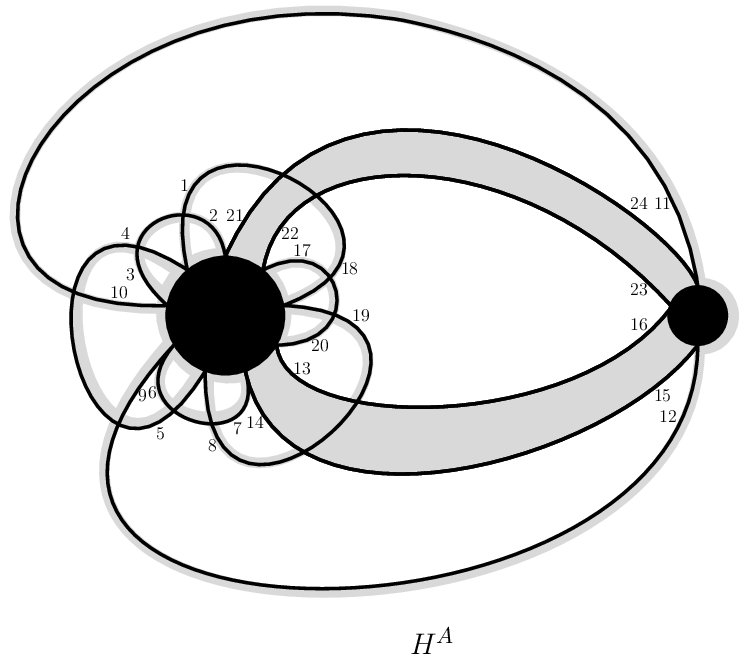}
    \caption{The partial dual of the hypermap $ H $ with respect to $ A$, where $A=\{e_{1}\}$.}
    \label{p7}
\end{figure}
\end{example}

\begin{figure}[H]
    \centering
    \includegraphics[width=2in]{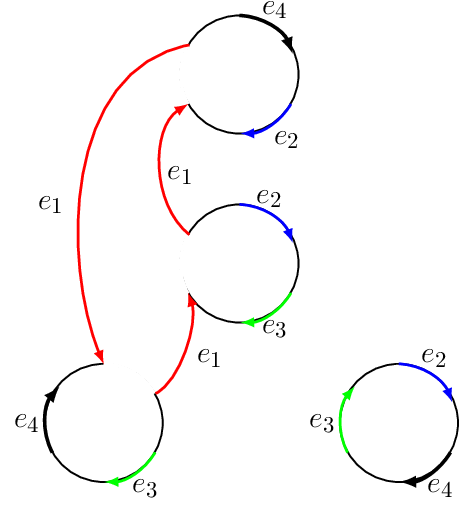}
    \setlength{\belowcaptionskip}{-0.4cm}
    \includegraphics[width=3in]{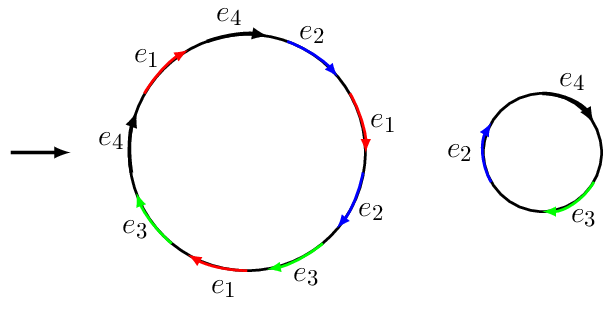}
    \caption{The arrow presentation of the partial dual $H^{A}$ of the hypermap in Figure \ref{p6}, where $A=\{e_{1}\}$.}
    \label{p8}

\end{figure}

Let $\vec{H}$ denote the arrow presentation of a hypermap $H$, and let $A$ be a subset of $E(H)$. In this context, there are $n_{i}$ marking arrows in $\vec{H}$, labeled as $e_{i} \; (\forall e_{i} \in A)$, which we refer to as $\alpha_{1}, \alpha_{2}, \ldots, \alpha_{n_{i}}$.

To illustrate the process, draw a directed line segment with an arrow from the head of $\alpha_{j}$ to the tail of $\alpha_{j+1}$ for each \( j = 1, 2, \ldots, n_{i}-1\). Additionally, create an arrow segment directed from the head of $\alpha_{n_i}$ to the tail of $\alpha_1$. Finally, label these newly created arrows with \( e_i \) and remove both the \( n_i \) marking arrows and their corresponding arcs. The resulting structure is referred to as the partial dual of \( H \) with respect to \( A\). This procedure is illustrated in Figure \ref{p8}.

\begin{property}\label{property1} \cite{CV22}
   Given a connected hypermap \( H \) and a subset \( A \subseteq E(H) \), the following properties hold:

(1) The number of components satisfies \( c(H^{A}) = c(H) \), and the sum of the incidences is given by \( \sum\limits_{i=1}^{e(H^{A})} n_{i}(H^{A}) = \sum\limits_{i=1}^{e(H)} n_{i}(H) \);

(2) If \( H \) is orientable, then \( H^{A} \) is also orientable;

(3) If \( A = E(H) \), then it follows that \( H^{A} = H^{*} \);

(4) For subsets \( A, B \subseteq E(H) \), we have the equality:
\[(H^{A})^{B} = (H^{B})^{A} = H^{A\cap B - A\cup B},\]
and additionally,
\[(H^{A})^{A\cap B - A\cup B} = H^B;\]

(5) The duality relation holds as follows:
\( (H^A)^* = (H^*)^A = H^{A^c}; \)

(6) Finally, it can be stated that
\( (H^A)^A = H. \)
\end{property}

We present the following theorem, which serves as an invariant for a formula regarding the genus change under partial duality, as established in \cite{CV22}. This theorem represents a generalization of the results obtained in \cite{GMT20}.

\begin{thm}\label{th2}
Given a connected hypermap \( H \) and a subset \( A \subseteq E(H) \), the Euler characteristic of the partial dual hypermap \( H^{A} \) is given by
\[
\chi(H^{A}) = \chi(A) + \chi(A^{c}) - 2v(H).
\]
\end{thm}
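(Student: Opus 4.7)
My plan is to prove the identity by expanding both sides of the claimed equality using the defining formula for $\chi$ in Definition \ref{def3}, and then matching them term-by-term via the enumerative facts about partial duality already recorded in the two preceding Property statements. Since $\chi$ is determined entirely by the four counts $v$, $e$, $f$, and $\sum n_i$, no geometric argument is required beyond what those properties supply; the proof is essentially a short bookkeeping exercise.

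First, I would expand $\chi(H^A)$ using Definition \ref{def3} and substitute the known values of the partial-dual invariants: $v(H^A) = f(A)$ and $e(H^A) = e(H)$ come directly from the first listed property, and $\sum_i n_i(H^A) = \sum_i n_i(H)$ is the incidence-preservation clause in Property \ref{property1}. For the face count I would chain together $f(H^A) = v((H^A)^*)$, the duality relation $(H^A)^* = H^{A^c}$, and then the formula $v(H^B) = f(B)$ applied to $B = A^c$, giving $f(H^A) = f(A^c)$. Substituting these in yields
$$\chi(H^A) \= f(A) + e(H) + f(A^c) - \sum_{i=1}^{e(H)} n_i(H).$$

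Next, I would expand the right-hand side. Since $A$ and $A^c$ are spanning sub-hypermaps of $H$, they share its vertex set, so $v(A) = v(A^c) = v(H)$; their hyperedge sets partition $E(H)$, so $e(A)+e(A^c) = e(H)$; and each retained hyperedge keeps its incidence count, so $\sum_{i \in A} n_i(A) + \sum_{i \in A^c} n_i(A^c) = \sum_i n_i(H)$. Summing the two Euler-characteristics from Definition \ref{def3} and subtracting $2v(H)$ then gives
$$\chi(A) + \chi(A^c) - 2v(H) \= e(H) + f(A) + f(A^c) - \sum_{i=1}^{e(H)} n_i(H),$$
which agrees with the expression obtained for $\chi(H^A)$ above.

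The only step that carries any real content is the identification $f(H^A) = f(A^c)$; it is not visible from the definition of partial duality alone and must be extracted by composing three separate items across the two Property lists. Once that chain is established, the rest is the cancellation of $2v(H)$ and the observation that incidence and hyperedge counts are additive over the partition $E(H) = A \sqcup A^c$, neither of which I expect to present any obstacle.
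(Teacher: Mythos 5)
Your proposal is correct and follows essentially the same route as the paper's own proof: expanding $\chi(H^A)$ via Definition \ref{def3}, substituting $v(H^A)=f(A)$, $e(H^A)=e(H)$, $\sum_i n_i(H^A)=\sum_i n_i(H)$, and obtaining $f(H^A)=f(A^c)$ by chaining $f(H^A)=v((H^A)^*)$ with $(H^A)^*=H^{A^c}$, then using additivity of hyperedge and incidence counts over the partition $E(H)=A\sqcup A^c$ together with $v(A)=v(A^c)=v(H)$. The only cosmetic difference is that you expand both sides and meet in the middle, whereas the paper transforms the left-hand side into the right in a single chain of equalities.
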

\begin{proof}
 Due to the equation $\chi(H) = v(H) + e(H) + f(H) - \sum\limits_{i=1}^{e(H)} n_{i}(H)$, along with the conditions $v(A) = v(A^{c}) = v(H)$ and (1), (5) of Property \ref{property1}, we can conclude that
\begin{equation}
\begin{aligned}
\chi(H^{A})&=v(H^{A})+e(H^{A})+f(H^{A})-\sum\limits_{i=1}^{e(H^{A})}n_{i}(H^{A})\\
           &=f(A)+e(H)+v((H^{A})^{*})-\sum\limits_{i=1}^{e(H)}n_{i}(H)\\
           &=f(A)+\left[e(A)+e(A^{c})\right]+v(H^{A^{c}})-\sum\limits_{i=1}^{e(H)}n_{i}(H)\\
           &=f(A)+\left[e(A)+e(A^{c})\right]+f(A^{c})-\sum\limits_{i=1}^{e(H)}n_{i}(H)\\
           &=\left[f(A)+e(A)-\sum\limits_{i=1}^{e(A)}n_{i}(A)\right]+\left[f(A^{c})+e(A^{c})-\sum\limits_{i=1}^{e(A^{c})}n_{i}(A^{c})\right]\\
           &=\chi(A)-v(A)+\chi(A^{c})-v(A^{c})\\
           &=\chi(A)+\chi(A^{c})-2v(H)\nonumber
\end{aligned}
\end{equation}
\end{proof}

\begin{thm}\label{th3}
Let \( H \) be a connected hypermap, and let \( A \subseteq E(H) \). Then, we have the following equations:
\begin{equation}
    \begin{aligned}
        \varepsilon(H^{A}) &= \varepsilon(A) + \varepsilon(A^{c}) + 2[c(H) - c(A) - c(A^{c})] + 2v(H);\\
        \gamma(H^{A}) &= \gamma(A) + \gamma(A^{c}) + c(H) - c(A) - c(A^{c}) + v(H)\nonumber.
    \end{aligned}
\end{equation}
Here, \( \varepsilon(H^{A}) \) and \( \gamma(H^{A}) \) denote the partial-dual Euler-genus and the partial-dual orientable genus of the hypermap \( H^A\), respectively.
\end{thm}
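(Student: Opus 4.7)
The plan is to reduce Theorem \ref{th3} directly to Theorem \ref{th2} via the standard relationships between Euler-characteristic, Euler-genus, and orientable genus. For any (possibly disconnected) cellularly embedded structure $X$, these relationships read
\begin{equation}
\varepsilon(X) = 2c(X) - \chi(X), \qquad \gamma(X) = c(X) - \tfrac{1}{2}\chi(X),\nonumber
\end{equation}
where the second formula applies in the orientable setting. The entire proof is then an algebraic manipulation.

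First, I would apply Theorem \ref{th2} together with Property \ref{property1}(1), which gives $c(H^{A}) = c(H)$, to obtain
\begin{equation}
\varepsilon(H^{A}) \;=\; 2c(H^{A}) - \chi(H^{A}) \;=\; 2c(H) - \chi(A) - \chi(A^{c}) + 2v(H).\nonumber
\end{equation}
Next, I would substitute $\chi(A) = 2c(A) - \varepsilon(A)$ and $\chi(A^{c}) = 2c(A^{c}) - \varepsilon(A^{c})$ into the right-hand side and collect terms; this immediately produces the first identity of Theorem \ref{th3}.

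For the orientable statement, the preliminary step is to cite Property \ref{property1}(2): if $H$ is orientable then so is $H^{A}$, and moreover the sub-hypermaps $A$ and $A^{c}$ inherit orientability from $H$, so $\gamma(A)$, $\gamma(A^{c})$, and $\gamma(H^{A})$ are all well defined. With this in hand, the identical computation as above, but with $\chi(X) = 2c(X) - 2\gamma(X)$ used in place of the Euler-genus relation, yields the second identity.

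I do not anticipate any genuine obstacle: the derivation is essentially bookkeeping. The only points that require a small amount of care are invoking $c(H^{A}) = c(H)$ from Property \ref{property1}(1) so that the component counts match up properly, and explicitly noting the orientability transfer from Property \ref{property1}(2) so that the orientable-genus formula is meaningful. Once these are in place, Theorem \ref{th2} does all of the geometric work.
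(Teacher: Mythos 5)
Your proposal is correct and takes essentially the same route as the paper: both apply Theorem \ref{th2} together with $c(H^{A})=c(H)$ and the relation $\varepsilon = 2c-\chi$, then rearrange by inserting $2c(A)$ and $2c(A^{c})$, and both dispose of the orientable case via Property \ref{property1}(2) (the paper simply halves the Euler-genus identity rather than recomputing with $\chi = 2c-2\gamma$, which is the same bookkeeping).
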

\begin{proof}
According to the equations $\varepsilon(H) = 2c(H) - \chi(H)$, $c(H^{A}) = c(H)$, and $\chi(H^{A}) = \chi(A) + \chi(A^{c}) - 2v(H)$, we can derive the following:

\begin{equation}
    \begin{aligned}
    \varepsilon(H^{A}) & = 2c(H^{A}) - \chi(H^{A}) \\
                       & = 2c(H) - [\chi(A) + \chi(A^{c}) - 2v(H)] \\
                       & = [2c(A) - \chi(A)] + [2c(A^{c}) - \chi(A^{c})] + [2c(H) - 2c(A) - 2c(A^{c})] + 2v(H) \\
                       & = \varepsilon(A) + \varepsilon(A^{c}) + 2[c(H)- c(A)- c(A^{c})] + 2v(H).\nonumber
    \end{aligned}
\end{equation}

If $H$ is an oriented hypermap, then $H^A$, $A$, and $A^C$ are all orientable. Therefore, we have:

\begin{equation}
    \gamma (H^A)=\frac{1}{2}\varepsilon (H^A)=\gamma (A)+\gamma ( A ^ { c })+ c( H )- c( A )- c( A ^ { c })+ v( H ).\nonumber
\end{equation}
\end{proof}

\subsection{The partial-dual genus polynomial}
\begin{defn}\label{def5}
Let \( H \) be a connected hypermap. The partial-dual Euler-genus polynomial of \( H \) serves as the generating function that enumerates partial-duals according to their Euler genus:

\[
\partial_{\varepsilon_{H}}(z)=\sum\limits_{A\subseteq E(H)}z^{\varepsilon(H^{A})}.
\]

In a similar manner, the partial-dual (orientable) genus polynomial of \( H \) is defined as the generating function that enumerates partial duals based on their orientable genus:

\[
\partial_{\Gamma_{H}}(z)=\sum\limits_{A\subseteq E(H)}z^{\gamma(H^{A})}.
\]\end{defn}

\begin{defn}\label{def6}
A polynomial \( f(z) = \sum\limits_{i=0}^{m} a_{i} z^{i} \) is said to be \textit{interpolating}\cite{GMT20} if its spectrum, denoted as \( Spec(f) \), forms an integer interval \( [m,n] \) that includes all integers from \( m \) to \( n \). Here, the spectrum of the polynomial \( f(z) \), defined as \( Spec(f) = \{ i\;|\;a_{i} \neq 0\} \), represents the indices of non-zero coefficients.

A gap in the spectrum of the polynomial \( f(z) \) is characterized as a maximal integer interval \( [m,n] \), such that the intersection of this interval with the spectrum satisfies \( [m,n] \cap Spec(f)=\emptyset,\; n\geq m+2. \) The size of such a gap is given by \( n-m-1\), which denotes the number of integers contained within this gap.\end{defn}

Consequently, the spectrum of the partial-dual Euler-genus polynomial of a hypermap is given by \( Spec(\partial_{\varepsilon_{H}}) = \{\varepsilon(H^{A}), A \subset H\} \); similarly, the spectrum of the partial-dual orientable genus polynomial of a hypermap is expressed as \( Spec(\partial_{\Gamma_{H}}) = \{\gamma(H^{A}), A \subset H\} \).
\begin{prop}\label{pro2}
 Let $H$ be a connected hypermap. Then,

 (1) $\partial_{\Gamma_{H}}(1)=2^{e(H)};$

 (2) If $H$ is orientable, then $\partial_{\Gamma_{H}}(z)=\partial_{\varepsilon_{H}}(z^{2});$

 (3) The size of a gap in $Spec(\partial_{\varepsilon_{H}})$ can tend to arbitrarily large;

 (4) $Spec(\partial_{\Gamma_{H}})$ is not necessarily an interpolating polynomial.
\end{prop}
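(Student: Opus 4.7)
Parts (1) and (2) are essentially definitional. For (1), evaluating at $z=1$ gives $\partial_{\Gamma_{H}}(1)=\sum_{A\subseteq E(H)} 1^{\gamma(H^{A})} = 2^{e(H)}$ since there are $2^{e(H)}$ subsets of $E(H)$. For (2), I invoke Property \ref{property1}(2), which asserts that if $H$ is orientable then so is $H^{A}$ for every $A\subseteq E(H)$; hence $\varepsilon(H^{A})=2\gamma(H^{A})$ for every partial dual, and substituting this relation term-by-term into Definition \ref{def5} identifies the two generating functions under the substitution $z\leftrightarrow z^{2}$.

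For part (3), my plan is to construct an infinite family $\{H_{n}\}_{n\geq 1}$ of connected hypermaps whose partial-dual Euler-genus spectra contain gaps of size tending to infinity with $n$. A natural setting is that of \emph{single-vertex hypermaps}, since then $c(H)=c(A)=c(A^{c})=1$ for every subset $A$, and the Euler-genus formula of Theorem \ref{th3} collapses to the clean identity $\varepsilon(H^{A})=\varepsilon(A)+\varepsilon(A^{c})$. The strategy is to build $H_{n}$ from two ``blocks'': a collection of simple hyperedges whose sub-hypermaps all embed planarly, together with one highly twisted hyperedge that forces $\varepsilon(H_{n})$ to be large. If the design guarantees that every proper sub-hypermap has Euler-genus confined to a bounded range while $\varepsilon(H_{n})\to\infty$, the spectrum will split into a ``small'' cluster and a ``large'' cluster with an unavoidable gap whose size grows with $n$.

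Part (4) will be settled by exhibiting a single explicit orientable hypermap for which the partial-dual orientable genus polynomial has a non-interval spectrum. Direct enumeration of $\gamma(H^{A})$ over all subsets $A\subseteq E(H)$ for a carefully chosen small example (with three or four hyperedges) should produce a missing intermediate integer, certifying a gap in $\mathrm{Spec}(\partial_{\Gamma_{H}})$. The main obstacle in both (3) and (4) is combinatorial design: although Theorem \ref{th3} reduces $\varepsilon(H^{A})$ and $\gamma(H^{A})$ to invariants of sub-hypermaps, showing that specific integers are \emph{absent} from the spectrum requires case-by-case verification across all $2^{e(H)}$ subsets, and in (3) one must additionally confirm that the gap truly grows unboundedly with $n$ rather than stabilizing once the two clusters fuse.
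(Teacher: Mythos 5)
Parts (1) and (2) of your argument are correct and coincide with the paper's proof: (1) is the count of subsets of $E(H)$, and (2) follows from Property \ref{property1}(2) together with $\varepsilon(H^{A})=2\gamma(H^{A})$ for orientable partial duals.

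Parts (3) and (4), however, are existence claims, and you have not actually exhibited the required examples --- you have only described a search strategy and then candidly listed the verification steps you have not carried out. That is the genuine gap: without a concrete family $\{H_n\}$ whose spectra are computed and shown to contain unboundedly large gaps, and without a concrete hypermap whose orientable-genus spectrum is computed and shown to omit an intermediate value, nothing is proved. Moreover, the specific setting you propose for (3) cannot work within this paper's model. A \emph{single-vertex} hypermap here has every hyperedge equal to the one-element vertex set, so its associated bipartite graph is a star, i.e.\ a tree; every spanning sub-hypermap $A$ and every complement $A^{c}$ is then a connected planar star with $\chi(A)=\chi(A^{c})=2$, and Theorem \ref{th2} gives $\chi(H^{A})=2+2-2=2$, hence $\varepsilon(H^{A})=0$ for \emph{every} $A$. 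The spectrum collapses to $\{0\}$ and no ``highly twisted hyperedge'' can force $\varepsilon(H_n)\to\infty$: you need at least two vertices for a sub-hypermap to carry positive Euler genus. For comparison, the paper settles (3) with an explicit family (Figure \ref{p9}) whose spectrum is $\{0,4,2n-8,2n-4\}$, giving a gap of size $2n-13$ for $n\geq 7$, and settles (4) with the explicit hypermap of Figure \ref{p6}, for which $\mathrm{Spec}(\partial_{\Gamma_{H}})=\{0,2,3\}$ omits $1$. Your write-up would need to supply comparable concrete constructions and the accompanying spectrum computations before (3) and (4) can be considered proved.
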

\begin{proof}
(1) We have $\partial_{\Gamma_{H}}(1)=\sum\limits_{A\subseteq E(H)}1^{\gamma(H^{A})}=\sum\limits_{A\subseteq E(H)}1=2^{e(H)}.$

(2) If $H$ is oriented, then it follows that $\varepsilon(H)=2\gamma(H)$, leading to the expression $\partial_{\varepsilon_{H}}(z)=\sum\limits_{A \subseteq E(H)} z^{\varepsilon(H^{A})} = \sum\limits_{A \subseteq E(H)} z^{2 \gamma(H^{A})}.$

(3) As illustrated in Figure \ref{p9}, we find that
$Spec(\partial_{\varepsilon_{H}}) = \{\varepsilon(H^{A})\} = \{0,4,2n-8,2n-4\}.$
Thus, the size of this gap is given by {$2n-8-4-1=2n-13,\; n \geq 7$}, which becomes arbitrarily large as $n$ approaches arbitrarily large.


(4) As depicted in Figure \ref{p6}, according to Theorem \ref{th3}, we obtain $Spec(\partial_{\Gamma_{H}})=\{0, 2, 3\}$. It is evident that $\partial_{\Gamma_H}(z)$ does not constitute an interpolating polynomial.

\end{proof}
\begin{figure}[H]
    \centering
    \includegraphics[width=2.5in]{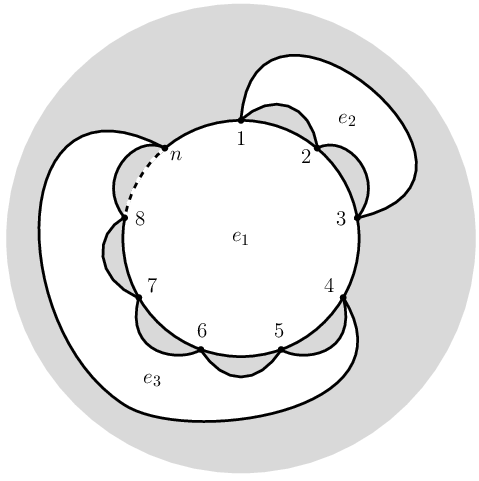}
    \caption{The size of the gap can be arbitrarily large.}
    \label{p9}

\end{figure}

\begin{rem}Gross, Mansour, and Tucker {\cite{GMT20}} demonstrated that the partial-dual genus polynomials for all orientable maps are interpolating. However, as indicated in (3) of Proposition \ref{pro2}, this assertion does not hold true for hypermaps.
\end{rem}

\section{The operations of join and bar-amalgamation in hypermaps}
In this section, we present the operations of join and bar-amalgamation for hypermaps, along with the corresponding partial-dual Euler-genus polynomials associated with these two operations.

\begin{defn}\label{def7}
Given two connected and disjoint hypermaps \(H_{1} = (B_{1}, \tau_{1}, \psi_{1})\) and \( H_{2} = (B_{2}, \tau_{2}, \psi_{2}) \), for any vertex \( v_{1} \in H_{1} \) and \( v_{2} \in H_{2} \), the corresponding bi-rotations are defined as follows:
\[
\tau_{1}(v_{1}) = (i_1, i_2, \ldots, i_k)(i'_{1},i'_{k}, i'_{k-1},\ldots,i'_{2});
\quad
\tau_2(v_2) = (j_1, j_2, \ldots, j_m)(j'_{\text{1}},j'_{m}, j'_{m-1},\ldots,j'_{2})
\]
where \( i_k, i'_k, j_\text{m}, j'_\text{m} \) are positive integers. We define the \textit{join operation} on \( H_1 \) and \( H_2 \) with respect to vertices \( v_1 \) and \( v_2\), denoted by \(H_1\vee H_2\), as follows:

(1) The functions $\tau_{1}(v)$ and $\tau_{2}(v')$ remain invariant for any vertices $v_{1} \neq v \in H_{1}$ and $v_{2} \neq v' \in H_{2}$.

(2) For any corner $i_{x}i^{'}_{x}$ of vertex $v_{1}$, the gluing of vertex $v_{2}$ results in a new vertex denoted as $u$. The bi-rotation of this new vertex is expressed as follows:
$$\tau(u) = (i_{1}, i_{2}, \ldots, i_{x-1}, j_{1}, j_{2}, \ldots, j_{m}, i_x, \ldots, i_k)(i^{'}_1, i^{'}_k, \ldots, i^{'}_{x+1}, j^{'}_1, j^{'}_m,\ldots,j^{' }_2,i^{' }_x,\ldots,i^{' }_2).$$
\end{defn}

Figure \ref{p10} illustrates the join \( H_{1} \vee H_{2} \) of the two hypermaps \( H_1 \) and \( H_2 \).
\begin{figure}[H]
    \centering
    \includegraphics[width=4in]{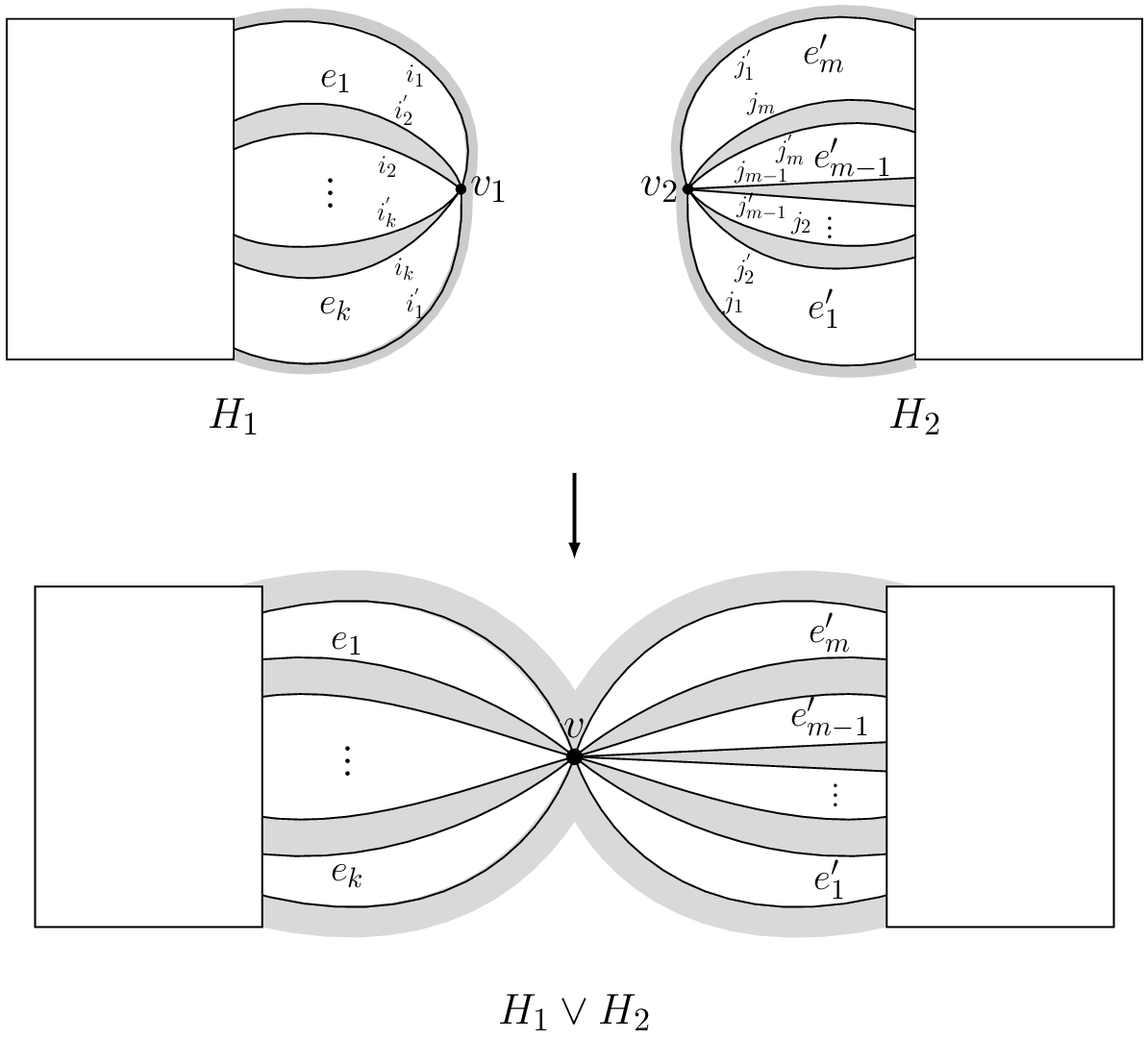}
    \caption{The join \( H_{1} \vee H_{2} \) of the hypermaps \( H_1 \) and \( H_2 \).}
    \label{p10}
\end{figure}
\begin{prop}\label{pro3}
Let \( H_{1} \) and \( H_{2} \) be two connected hypermaps such that \( H_{1} \cap H_{2} = \emptyset \). Then, we have the following results:

\begin{enumerate}
  \item The Euler characteristic of the disjoint union of these hypermaps is given by
\[
\chi(H_{1} \vee H_{2}) = \chi(H_{1}) + \chi(H_{2}) - 2.
\]

  \item  The Euler-genus in the disjoint union can be expressed as
\[
\varepsilon(H_{1} \vee H_{2}) = \varepsilon(H_{1}) + \varepsilon(H_{2}).
\]
\end{enumerate}

\end{prop}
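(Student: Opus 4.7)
The plan is to derive (1) directly from the Euler-characteristic formula of Definition \ref{def3}, and then deduce (2) as an immediate corollary via the identity $\varepsilon(H)=2c(H)-\chi(H)$ together with the fact that the join of two connected hypermaps remains connected.

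Inspecting Definition \ref{def7}, the join replaces the pair of vertices $v_1\in V(H_1)$ and $v_2\in V(H_2)$ by a single merged vertex $u$, while every other vertex, every hyperedge, and every vertex-hyperedge incidence is left unchanged. This yields three equalities immediately: $v(H_1\vee H_2)=v(H_1)+v(H_2)-1$, $e(H_1\vee H_2)=e(H_1)+e(H_2)$, and $\sum_{i} n_i(H_1\vee H_2)=\sum_{i} n_i(H_1)+\sum_{i} n_i(H_2)$. The crux is the face count, where I would prove $f(H_1\vee H_2)=f(H_1)+f(H_2)-1$ by analyzing the face permutation $\psi\circ\tau$. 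Since $\psi$ is unchanged under the join and $\tau$ is modified only at the labels incident to $v_1$ and $v_2$, every cycle of the face permutation not passing through these labels survives intact. Tracking the single corner $i_x i'_x$ of $v_1$ where the cyclic structure of $v_2$ is inserted, one shows that the cycle of $\psi\circ\tau_1$ through this corner concatenates with the cycle of $\psi\circ\tau_2$ through the matching corner of $v_2$, so the total face count drops by exactly one from $f(H_1)+f(H_2)$.

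Plugging these counts into Definition \ref{def3} gives $\chi(H_1\vee H_2)=[v(H_1)+v(H_2)-1]+[e(H_1)+e(H_2)]+[f(H_1)+f(H_2)-1]-\sum_i[n_i(H_1)+n_i(H_2)]=\chi(H_1)+\chi(H_2)-2$, which is (1). For (2), since $H_1$ and $H_2$ are connected and the join identifies a vertex of each into a common vertex, $H_1\vee H_2$ is connected, so $c(H_1\vee H_2)=c(H_1)=c(H_2)=1$. Applying $\varepsilon(H)=2c(H)-\chi(H)$ then yields $\varepsilon(H_1\vee H_2)=2-\chi(H_1\vee H_2)=[2-\chi(H_1)]+[2-\chi(H_2)]=\varepsilon(H_1)+\varepsilon(H_2)$, as required.

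The main obstacle is the face-count claim $f(H_1\vee H_2)=f(H_1)+f(H_2)-1$. The bi-rotation insertion prescribed in Definition \ref{def7} is intricate, and one must verify at the level of cyclic permutations of $\psi\circ\tau$ that precisely one cycle of $\psi\circ\tau_1$ is concatenated with precisely one cycle of $\psi\circ\tau_2$, rather than splitting into additional cycles, and that no other cycle of the face permutation is affected. This can be checked by direct evaluation of $\psi\circ\tau'$ at the two insertion points prescribed in Definition \ref{def7}. As an independent sanity check, one may pass to the underlying bipartite graph $G_H$ and invoke the classical fact that vertex amalgamation of two connected embedded graphs at a vertex satisfies $\chi(G_1\vee G_2)=\chi(G_1)+\chi(G_2)-2$; combined with the remark following Definition \ref{def3}, which equates $\chi(H)$ with $\chi(G_H)$, this immediately delivers (1).
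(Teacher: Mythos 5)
Your proposal is correct and follows essentially the same route as the paper: both derive the four counting identities $v$, $e$, $f$, $\sum n_i$ for $H_1\vee H_2$ from Definition~\ref{def7}, substitute into the Euler-characteristic formula of Definition~\ref{def3} to get (1), and obtain (2) from $\varepsilon=2c-\chi$ together with connectedness of the join. In fact you give more justification for the key face count $f(H_1\vee H_2)=f(H_1)+f(H_2)-1$ than the paper does, which simply asserts it as a consequence of the definition.
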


\begin{proof}
According to Definition \ref{def7}, we can derive the following equations:

\begin{align*}f(H_{1}\vee H_{2})&=f(H_{1})+f(H_{2})-1,\\
v(H_{1}\vee H_{2})&=v(H_{1})+v(H_{2})-1,\\
e(H_{1}\vee H_{2})&=e(H_{1})+e(H_{2}),\\
\sum\limits_{i=1}^{e(H_{1}\vee H_{2})}n_{i}(H_{1}\vee H_{2})&=\sum\limits_{i=1}^{e(H_{1})}n_{i}(H_{1})+\sum\limits_{i=1}^{e(H_{2})}n_{i}(H_{2}).
\end{align*}
From these, the Euler characteristic of the hypermap is expressed as follows:
    \begin{equation}
    \begin{aligned}
    \chi(H_{1}\vee H_{2})&=v(H_{1}\vee H_{2})+f(H_{1}\vee H_{2})+e(H_{1}\vee H_{2})-\sum\limits_{i=1}^{e(H_{1}\vee H_{2})}n_{i}(H_{1}\vee H_{2})\\
    &=\left[v(H_{1})+f(H_{1})+e(H_{1})-\sum\limits_{i=1}^{e(H_{1})}n_{i}(H_{1})\right]\\& \ \ +\left[v(H_{2})+f(H_{2})+e(H_{2})-\sum\limits_{i=1}^{e(H_{2})}n_{i}(H_{2})\right]-2\\
    &=\chi(H_{1})+\chi(H_{2})-2\nonumber.
    \end{aligned}
    \end{equation}
Consequently, we have
 $\varepsilon(H_{1}\vee H_{2})=2c(H_{1}\vee H_{2})-\chi(H_{1}\vee H_{2})=2-\chi(H_{1}\vee H_{2})=2-\chi(H_{1})+2-\chi(H_{2})=\varepsilon(H_{1})+\varepsilon(H_{2}).$
\end{proof}

\begin{thm}\label{th4}
Let \( H_{1} \) and \( H_{2} \) be two connected hypermaps such that \( H_{1} \cap H_{2} = \emptyset \). Then, the partial-dual Euler-genus polynomial of the join \( H_{1} \vee H_{2} \) is given by
\[
\partial_{\varepsilon_{H_{1}\vee H_{2}}}(z) = \partial_{\varepsilon_{H_{1}}}(z) \cdot \partial_{\varepsilon_{H_{2}}}(z).
\]\end{thm}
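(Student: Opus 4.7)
The plan is to exploit the bijection between subsets $A \subseteq E(H_1 \vee H_2)$ and pairs $(A_1,A_2)$ with $A_1 \subseteq E(H_1)$ and $A_2 \subseteq E(H_2)$, which exists because the join identifies only a vertex and therefore $E(H_1 \vee H_2) = E(H_1) \sqcup E(H_2)$. Granted this, the theorem reduces to the pointwise identity
\begin{equation*}
\varepsilon\bigl((H_1 \vee H_2)^{A}\bigr) = \varepsilon(H_1^{A_1}) + \varepsilon(H_2^{A_2}),
\end{equation*}
after which the generating function factors cleanly as
\begin{equation*}
\partial_{\varepsilon_{H_1 \vee H_2}}(z) = \sum_{A_1,\, A_2} z^{\varepsilon(H_1^{A_1}) + \varepsilon(H_2^{A_2})} = \partial_{\varepsilon_{H_1}}(z)\cdot\partial_{\varepsilon_{H_2}}(z).
\end{equation*}

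The central step is the structural identity $(H_1 \vee H_2)^{A} = H_1^{A_1} \vee H_2^{A_2}$, with the join still taken at the image of the original joining vertex. I would prove it in two moves. First, because $A_1$ and $A_2$ are disjoint, Property \ref{property1}(4) gives $(H_1 \vee H_2)^{A} = ((H_1 \vee H_2)^{A_1})^{A_2}$. Second, I would argue $(H_1 \vee H_2)^{A_1} = H_1^{A_1} \vee H_2$ (and symmetrically for $A_2$) by working in the arrow presentation: the partial-dual operation with respect to a hyperedge $e$ acts only on the marking arrows labelled $e$, and all such arrows for $e \in A_1 \subseteq E(H_1)$ lie on vertex-disks inherited from $H_1$, in particular on the $H_1$-side of the joined vertex-disk. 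Consequently the operation does not touch the $H_2$-part of the structure, and the outcome can still be read as a join of $H_1^{A_1}$ with $H_2$ at the same vertex. Iterating this observation yields the full identity, after which Proposition \ref{pro3}(2) delivers the required additivity of Euler-genera.

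The main obstacle is precisely this structural commutativity of partial duality with join. The delicate point is verifying that the new arrows produced by the partial-dual operation on the hyperedges of $A_1$ stay confined to the $H_1$-side of the joined vertex-disk, so that a legitimate join decomposition into $H_1^{A_1}$ and $H_2$ survives. Should the arrow-presentation bookkeeping prove awkward, I would fall back on a direct computation via Theorem \ref{th3}, using $v(H_1 \vee H_2) = v(H_1) + v(H_2) - 1$ and $c(H_1 \vee H_2) = 1$, and analysing a small number of cases depending on whether the joining vertex is isolated in the spanning sub-hypermaps $A_i \subseteq H_i$; that route still produces the desired additivity and hence the factorisation of $\partial_{\varepsilon_{H_1 \vee H_2}}(z)$.
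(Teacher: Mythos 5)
Your proposal is correct and follows essentially the same route as the paper: decompose $A$ as $A_{1}\cup A_{2}$ with $A_{i}=A\cap E(H_{i})$, establish the pointwise identity $\varepsilon\bigl((H_{1}\vee H_{2})^{A}\bigr)=\varepsilon(H_{1}^{A_{1}})+\varepsilon(H_{2}^{A_{2}})$, and factor the resulting sum. The paper in fact asserts that identity without justification, whereas you supply two viable arguments for it (the structural commutation of partial duality with the join, and the fallback computation via Theorem~\ref{th3} using $c(A)=c(A_{1})+c(A_{2})-1$ and $v(H_{1}\vee H_{2})=v(H_{1})+v(H_{2})-1$), so your write-up is if anything more complete.
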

\begin{proof}
Note that \( E(H_{1} \vee H_{2}) = E(H_{1}) \cup E(H_{2}) \). Let \( A \subseteq E(H_{1} \vee H_{2}) \). We can express the set \( A \) as follows:
\[
A = [A \cap E(H_{1})] \cup [A \cap E(H_{2})] \triangleq A_{1} \cup A_{2},
\]
where it is clear that \( A_{1} \cap A_{2} = \emptyset. \)

Consequently, we have the relationship:
\[
\varepsilon[(H_{1} \vee H_{2})^{A}]=\varepsilon[H_{1}^{A_ {1}}]+\varepsilon[H_ {2}^{A_ {2}}].
\]

Thus, we arrive at the conclusion:\begin{equation}
        \begin{aligned}
            \partial_{\varepsilon_{H_{1}\bigvee H_{2}}}(z)&=\sum\limits_{A\subseteq E(H_{1}\vee H_{2})}z^{\varepsilon[(H_{1}\vee H_{2})^{A}]}\\
                                                      &=\sum\limits_{A\subseteq E(H_{1}\vee H_{2})}z^{\varepsilon[H_{1}^{A\cap E(H_{1})}]+\varepsilon[H_{2}^{A\cap E(H_{2})}]}\\
                                                      &=\sum\limits_{A\subseteq E(H_{1}\vee H_{2})}z^{\varepsilon[H_{1}^{A\cap E(H_{1})}]}
                                                      \cdot z^{\varepsilon[H_{2}^{A\cap E(H_{2})}]}\\
                                                      &=\sum\limits_{A_{1}\subseteq E(H_{1})}z^{\varepsilon[H_{1}^{A_{1}}]}\sum\limits_{A_{2}\subseteq E(H_{2})}z^{\varepsilon[H_{2}^{A_{2}}]}\\
                                                      &=\partial_{\varepsilon_{H_{1}}}(z)\partial_{\varepsilon_{H_{2}}}(z)\nonumber
        \end{aligned}
    \end{equation}
\end{proof}

\begin{defn}
\label{def8}
Let \( H_{1} = (B_{1}, \tau_{1}, \psi_{1}) \) and \( H_{2} = (B_{2}, \tau_{2}, \psi_{2}) \) be two connected hypermaps, with the condition that \( H_{1} \cap H_{2} = \emptyset \). For any hyperedge \( e_{1} \in H_{1} \) and \( e_{2} \in H_{2} \), let us select \( m \) vertices from \( e_1\), denoted as \( v_1, v_2, ..., v_m\); similarly, let us choose \( n\) vertices from  \( e_2\), represented by  \( u_1, u_2, ..., u_n\). The bi-rotation of each vertex is defined as follows:
\[\tau_{1}(v_{i})=(i_{1},i_{2},\ldots,i_{k})(i^{'}_{1},i^{'}_{k},\ldots,i^{'}_{2}),\; i=1,2,\ldots,m;\]
   \[\tau_{2}(u_{j})=(j_{1},j_{2},\ldots,j_{t})(j^{'}_{1},j^{'}_{t},\ldots,j^{'}_{2}),\; j=1,2,\ldots,n.\]
We define the \textit{bar-amalgamation} operation on $H_{1}$ and $H_{2}$, denoted by $H_{1}\oplus_e H_{2}$, as follows:

(1) For any corner \(i_{x_i} i'_{x_{i'}}\) of vertex \(v_i\), where \(i=1,2,\ldots,m\) and \(x_i \in \{ 1,2,\ldots,k\}\);

(2) Select any corner \(j_{y_j} j'_{y_{j'}}\) of vertex \(u_j\), where \(j=1,2,\ldots,n\) and \(y_j \in \{ 1,2,\ldots,t\}\);

(3) Construct a new hyperedge \(e\), which connects all the corners identified in (1) and (2). This hyperedge is labeled by indices \(i''_{x_i}, i'''_{x_{i'}}, j''_{y_j}, j'''_{y_{j'}}\), such that
\[
\psi(e)= (1^{''}_{x_1}, 2^{''}_{x_2}, \ldots, m^{''}_{x_m}, n^{''}_{y_n}, \ldots, 2^{''}_{y_2}, 1^{''}_{y_1})(1^{''' } _{ x _ {  i'}} ,  1 ^ { ''' } _ { y _ { j'}} ,   2 ^ { ''' } _ { y _ {  j'}} ,   \ldots , n ^ {'''} _ { y_{n'}} , m ^ {'''} _ { x_{m'}} ,   \ldots ,    2 ^ {'''} _{ x_{i' }} )
\]
The newly constructed hyperedge \(e\) is referred to as the connecting hyperedge.
\end{defn}

Clearly, we  have:
\[B(H_{1}\oplus_e H_{2})=B_{1}\cup B_{2}\cup\{i^{''}_{x_{i}}, i^{'''}_{x_{i^{'}}}, j^{''}_{y_{j}}, j^{'''}_{y_{j^{'}}}\}, \; \psi(H_{1}\oplus_e H_{2})=\psi_{1}\cup\psi_{2}\cup\psi(e);\]
\[\tau(H_{1}\oplus_e H_{2})=\{(i_{1},i_{2},\ldots,i_{x_{i}-1},i^{'''}_{x_{i^{'}}},i_{x_{i}},\ldots,i_{k})(i^{'}_{1},i^{'}_{k},\ldots,i^{'}_{x_{i^{'}}+1},i^{''}_{x_{i}},i^{'}_{x_{i^{'}}},\ldots,
i^{'}_{2})\}\]
\[\cup\{(j_{1},j_{2},\ldots,j_{y_{j}-1},j^{'''}_{y_{j^{'}}},j_{y_{j}},\ldots,j_{t})
(j^{'}_{1},j^{'}_{t},\ldots,j^{'}_{y_{j^{'}}+1},j^{''}_{y_{j}},j^{'}_{y_{j^{'}}},\ldots,j^{'}_{2})\}\]
Where $i=1,2,\ldots,m$ and $j=1,2,\ldots,n.$

\begin{example}
\begin{figure}[H]
    \centering
    \includegraphics[width=4in]{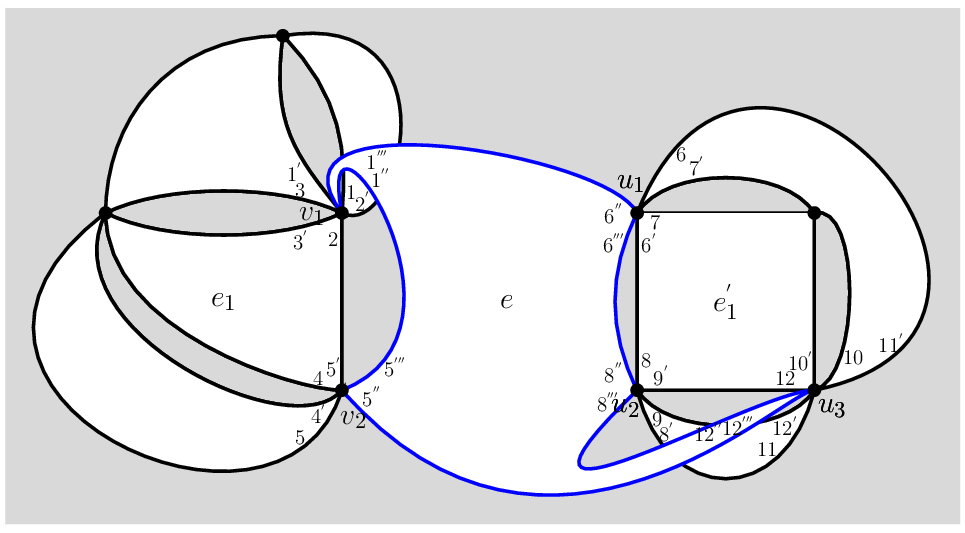}
    \caption{The bar-amalgamation $H_{1}\oplus_e H_{2}$.}
    \label{p11}
\end{figure}

By the definition of bar-amalgamation, we can have the results as follows:
\begin{align*}
\psi(e)&=(1^{'''},6^{'''},8^{'''},12^{'''},5^{'''})(1^{''},5^{''},12^{''},8^{''},6^{''})\\
\tau(v_{1})&=(1,2,3,1^{'''})(1^{'},3^{'},2^{'},1^{''})\\
\tau(v_{2})&=(4,5^{'''},5)(4^{'},5^{''},5^{'})\\
\tau(u_{1})&=(6,7,6^{'''})(6^{'},7^{'},6^{''})\\
\tau(u_{2})&=(8,9,8^{'''})(8^{'},9^{'},8^{''})\\
\tau(u_{3})&=(10,11,12^{'''},12)(10^{'},12^{''},12^{'},11^{'}).
\end{align*}
The bi-rotations of the other vertices remain consistent.
\end{example}
\begin{prop}\label{pro4}Let \( H_{1} \) and \( H_{2} \) be two connected hypermaps, with the condition that \( H_{1} \cap H_{2} = \emptyset \). Then we have the following results:
\begin{enumerate}
  \item If there are \( m \) corners in \( k_{1} \) distinct faces of \( H_{1} \), and \( n \) corners in \( k_{2} \) distinct faces of \( H_{2} \), then the function for the combined hypermap is given by \[
f(H_{1}\oplus_e H_{2}) = f(H_{1}) + f(H_{2}) + m + n - 2(k_{1}+k_{2}) + 1,
\]  where $1\leq k_{1}\leq m, 1\leq k_{2}\leq n$.

  \item The Euler characteristic of the combined hypermap can be expressed as $\chi(H_{1}\oplus_e H_{2})=\chi(H_{1})+\chi(H_{2})+2-2k_{1}-2k_{2}.$

  \item  $\varepsilon(H_{1}\oplus_e H_{2})=\varepsilon(H_{1})+\varepsilon(H_{2})+2k_{1}+2k_{2}-4.$

\end{enumerate}

\end{prop}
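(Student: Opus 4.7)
The plan is to establish (1) as the main combinatorial step, from which (2) and (3) fall out by routine Euler-characteristic manipulations. Note first that the bar-amalgamation adds exactly one hyperedge of degree $m+n$ and no new vertices, so
\[
v(H_1\oplus_e H_2)=v(H_1)+v(H_2),\quad e(H_1\oplus_e H_2)=e(H_1)+e(H_2)+1,
\]
\[
\textstyle\sum_{i}n_i(H_1\oplus_e H_2)=\sum_{i}n_i(H_1)+\sum_{i}n_i(H_2)+(m+n),
\]
and since the new hyperedge $e$ is incident to vertices in both components, $c(H_1\oplus_e H_2)=1$.

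For (1), I would partition the faces of $H_1\sqcup H_2$ into \emph{untouched} faces (those containing none of the $m+n$ selected corners) and \emph{touched} faces (the $k_1+k_2$ faces that do). There are $(f(H_1)-k_1)+(f(H_2)-k_2)$ untouched faces, and each persists unchanged after the bar-amalgamation. The remaining work is to show that the touched faces, together with the new hyperedge disk $D_e$, contribute exactly $(m+n)-(k_1+k_2)+1$ faces to $H_1\oplus_e H_2$; summing then yields (1).

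The key claim I would prove by induction on the degree $d=m+n$, attaching the $d$ ribbons of $D_e$ one at a time in the cyclic order prescribed by $\psi(e)$. The first ribbon is a pendant attachment and leaves the face count unchanged. Each subsequent ribbon ends at a corner whose original face in $H_1\sqcup H_2$ is either (i) already touched by some previous ribbon, in which case the ribbon subdivides a current face and $\Delta f=+1$, or (ii) a fresh original face, in which case the ribbon merges two currently distinct faces into one and $\Delta f=-1$. Among the last $d-1$ ribbons, exactly $k_1+k_2-1$ fall into case (ii) (one per newly-touched face) and $d-(k_1+k_2)$ into case (i), so
\[
\Delta f \;=\; \bigl(d-(k_1+k_2)\bigr)\cdot 1 + \bigl(k_1+k_2-1\bigr)\cdot(-1) \;=\; (m+n)-2(k_1+k_2)+1,
\]
and combined with the initial $k_1+k_2$ touched faces this yields the claim.

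Once (1) is in hand, (2) follows by substituting $v$, $e$, $f$, and $\sum_i n_i$ into the Euler-characteristic formula $\chi(H)=v(H)+e(H)+f(H)-\sum_i n_i(H)$: the $(m+n)$ terms cancel and the remaining constants combine to give $\chi(H_1\oplus_e H_2)=\chi(H_1)+\chi(H_2)+2-2(k_1+k_2)$. Then (3) is immediate from $\varepsilon(H)=2c(H)-\chi(H)$ and $c(H_1\oplus_e H_2)=1$. The hard part is justifying the $\pm 1$ dichotomy for each ribbon attachment. Geometrically this is transparent in the ribbon-graph picture (a band attached between two corners either splits one face-disk into two or joins two face-disks via a $1$-handle), but making it rigorous within the bi-rotation framework of Definition~\ref{def8} requires careful analysis of how inserting the new labels $i''_{x_i},i'''_{x_{i'}},j''_{y_j},j'''_{y_{j'}}$ into $\tau$ and $\psi$ modifies the orbit structure of the face permutation $\psi\circ\tau$, together with an argument that the answer depends only on the totals $k_1,k_2,m,n$ and not on the detailed distribution of the $m+n$ corners among the $k_1+k_2$ touched faces.
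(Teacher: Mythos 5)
Your reductions of (2) and (3) to (1) are exactly the paper's: identical bookkeeping for $v$, $e$, $\sum_i n_i$, then $\chi = v+e+f-\sum_i n_i$ and $\varepsilon = 2c-\chi$ with $c(H_1\oplus_e H_2)=1$. Where you genuinely diverge is in the proof of (1). The paper argues statically and per face: it splits into three cases ($k_1=m,k_2=n$; $k_1=k_2=1$; the general case) and in the general case counts directly that a face touched at $x_i$ corners is replaced by $x_i-1$ small faces plus a share of one common face, summing to $(m-k_1)+(n-k_2)+1$ new faces in place of the $k_1+k_2$ touched ones. You instead attach the $m+n$ ribbons of the hyperedge disk one at a time and track $\Delta f=\pm 1$ per ribbon, counting $k_1+k_2-1$ merges and $(m+n)-(k_1+k_2)$ splits. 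Your induction is cleaner in that it handles all three of the paper's cases uniformly, and it isolates the one nontrivial assertion more honestly. But note that the assertion you flag as ``the hard part'' --- that a ribbon landing in an already-touched face always \emph{splits} a current face --- is not a mere formalization issue: it is true only when the cyclic order of attachments prescribed by $\psi(e)$ is compatible with the order of the selected corners along each touched face boundary. If the orders cross, such a ribbon can land in a \emph{different} current face, producing a merge and a handle, and the face count drops below the stated formula. The paper's per-face count of ``$x_i-1$ distinct faces'' rests on exactly the same unstated compatibility assumption, so your proof is no less rigorous than the paper's; you have simply made the shared gap visible rather than hiding it in a figure.
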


\begin{proof}
By the definition of $H_{1}\oplus_e H_{2}$, we have: $v(H_{1}\oplus_e H_{2})=v(H_{1})+v(H_{2})$, $e(H_{1}\oplus_e H_{2})=e(H_{1})+e(H_{2})+1$, $\sum\limits_{i=1}^{e(H_{1}\oplus_e H_{2})}n_{i}(H_{1}\oplus_e H_{2})=\sum\limits_{i=1}^{e(H_{1})}n_{i}(H_{1})+\sum\limits_{i=1}^{e(H_{2})}n_{i}(H_{2})+m+n$.

According to Definition~\ref{def3}, items (2) and (3) follow from item (1). We now provide a proof for item (1). There are three cases.


(1) If \( k_{1} = m \) and \( k_{2} = n \), the new hyper-edge \( e \) merges \( m \) faces from \( H_{1} \) and \( n \) faces from \( H_{2} \) into a single face. Consequently, we have:
\[
f(H_{1}\oplus_e H_{2}) = f(H_{1}) + f(H_{2}) - m - n + 1;
\]

(2)  If both \( k_{1} \) and \( k_{2} \) are equal to $1,$ then:
\[
f(H_{1}\oplus_e H_{2}) = f(H_{1}) + f(H_{2}) + m + n - 3;
\]

   (3) If \(1 < k_{1} < m\) and \(1 < k_{2} < n\). We define two distinct sets of faces: the first set consists of faces \(f_{1}, f_{2}, \ldots, f_{k_{1}}\), while the second set comprises faces \(F_{1}, F_{2}, \ldots, F_{k_{2}}\). Next, we select non-overlapping segments \(x_i\) from each face \(f_i\) for all \(i = 1, 2, \ldots, k_1\), and non-overlapping segments \(y_j\) from each face \(F_j\) for all \(j = 1, 2, \ldots, k_2\). It follows that the sums satisfy the conditions:
\[
\sum_{i=1}^{k_1} x_i = m,
\]
and
\[
\sum_{j=1}^{k_2} y_j = n.
\]
\begin{figure}[H]
    \centering
     \includegraphics[width=4in]{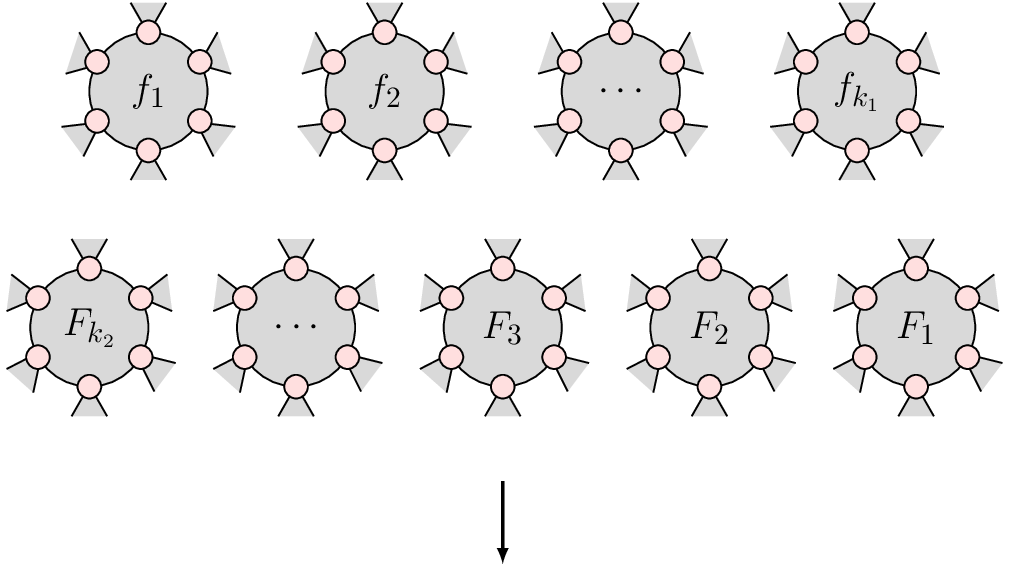}
    \includegraphics[width=4in]{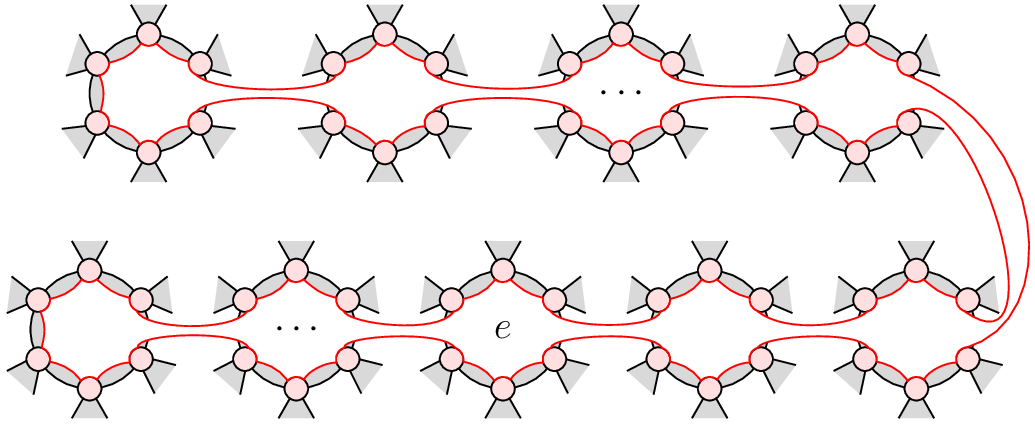}
    \caption{Faces of $H_{1}\oplus_e H_{2}$.}
    \label{p12}

\end{figure}

Therefore, the new edge \( e \) forms \( x_{i}-1 \) distinct faces with the boundary of face \( f_{i} \) in \( H_{1} \), resulting in a total of
\[
\sum\limits_{i=1}^{k_{1}}(x_{i}-1)=m-k_{1}
\]
faces. Similarly, the new edge \( e \) creates \( y_{j}-1 \) distinct faces with the boundary of face \( F_{j} \) in \( H_{2} \), leading to a total of
\[
\sum\limits_{j=1}^{k_{2}}(y_{j}-1)=n-k_{2}
\]
faces, where there is an  a common face among the combined count of \( k_{1}+k_{2} \) faces, as shown in Figure \ref{p12}. Consequently, we can express the number of faces for the connected sum as follows:
\[
f(H_ { 1 } \oplus_e H_ { 2 }) = f(H_ { 1 }) + f(H_ { 2 }) - (k_ { 1 } + k_ { 2 }) + [m+n-(k_ { 1 }+k_ { 2 }-1)].
\]
This simplifies to:

\[
f(H _{ 1 }\oplus_e H _{ 2 }) = f(H _{ 1 }) + f(H _{ 2 }) + m+n - 2(k _{ 1 }+k _{ 2}) + 1.
\]
\end{proof}

\begin{thm}\label{th6}
Let \( H_{1} \) and \( H_{2} \) be two connected hypermaps such that \( H_{1} \cap H_{2} = \emptyset \). Then, we have the following equation:

\begin{equation}
    \partial_{\varepsilon_{H_{1}\oplus_e H_{2}}}= 2\sum\limits_{A\subseteq E(H_{1}\cup H_{2})} z^{\varepsilon[H_{1}^{A\cap E(H_{1})}] + \varepsilon[H_{2}^{A\cap E(H_{2})}] + 2(k_1 + k_2 - 2)}.
\nonumber
\end{equation}

Here, \( k_1 \) denotes the number of distinct faces associated with the \( m \) corners located in \( A^{c}\cap H_1 \), while \( k_2 \) represents the number of distinct faces corresponding to the \( n \) corners situated in \( A^{c}\cap H_2 \).\end{thm}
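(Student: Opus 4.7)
The plan is to split the sum defining $\partial_{\varepsilon_{H_1 \oplus_e H_2}}(z) = \sum_{A' \subseteq E(H_1 \oplus_e H_2)} z^{\varepsilon((H_1 \oplus_e H_2)^{A'})}$ according to whether the connecting hyperedge $e$ belongs to $A'$, then use the duality $(H^{A'})^{*} = H^{A'^c}$ of Property \ref{property1}(5) to show that the two partial sums are equal, and finally compute the common exponent using Theorem \ref{th2} together with Proposition \ref{pro4}.

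For the first step, since dual maps share the same Euler characteristic, we have $\varepsilon((H_1 \oplus_e H_2)^{A'}) = \varepsilon((H_1 \oplus_e H_2)^{A'^c})$. Because $e \in A'$ if and only if $e \notin A'^c$, the involution $A' \leftrightarrow A'^c$ pairs each subset containing $e$ with a subset not containing $e$, matching their contributions term-by-term. Therefore
$$\partial_{\varepsilon_{H_1 \oplus_e H_2}}(z) = 2 \sum_{A \subseteq E(H_1 \cup H_2)} z^{\varepsilon((H_1 \oplus_e H_2)^{A})},$$
and it remains to evaluate the exponent for $A \subseteq E(H_1 \cup H_2)$.

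For the second step, write $A_1 = A \cap E(H_1)$ and $A_2 = A \cap E(H_2)$. As a spanning sub-hypermap of $H_1 \oplus_e H_2$, the set $A$ is the disjoint union of $A_1$ (spanning $H_1$) and $A_2$ (spanning $H_2$), so $\chi(A) = \chi(A_1) + \chi(A_2)$. The complementary spanning sub-hypermap $A^c$ (inside $H_1 \oplus_e H_2$) contains the connecting hyperedge $e$ and is therefore the bar-amalgamation $A_1^c \oplus_e A_2^c$. Applying Proposition \ref{pro4}(2) gives $\chi(A^c) = \chi(A_1^c) + \chi(A_2^c) + 2 - 2k_1 - 2k_2$, where $k_1, k_2$ count the distinct faces of $A_1^c, A_2^c$ containing the prescribed corners. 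Substituting into Theorem \ref{th2} and using $v(H_1 \oplus_e H_2) = v(H_1) + v(H_2)$ yields
$$\chi((H_1 \oplus_e H_2)^A) = \chi(H_1^{A_1}) + \chi(H_2^{A_2}) + 2 - 2k_1 - 2k_2,$$
and converting to Euler-genus via $\varepsilon = 2c - \chi$ (each of $(H_1 \oplus_e H_2)^A$, $H_1^{A_1}$, $H_2^{A_2}$ is connected by Property \ref{property1}(1)) gives $\varepsilon((H_1 \oplus_e H_2)^A) = \varepsilon(H_1^{A_1}) + \varepsilon(H_2^{A_2}) + 2(k_1 + k_2 - 2)$, which together with the first step yields the claim.

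The main obstacle I anticipate is the second step: verifying that $A^c$ really is the bar-amalgamation $A_1^c \oplus_e A_2^c$ in the precise sense of Definition \ref{def8}, so that Proposition \ref{pro4} applies. This requires checking that the bi-rotations at the vertices incident with $e$ are inherited unchanged upon restricting to spanning sub-hypermaps, and that the same $m$ and $n$ corners remain the distinguished ones. Consequently $k_1, k_2$ must be interpreted with respect to the faces of $A_1^c, A_2^c$ rather than those of $H_1, H_2$, and this compatibility between the corner structure of the ambient hypermap and its spanning sub-hypermaps is exactly what makes the formula consistent across all $A$.
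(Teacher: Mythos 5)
Your proof is correct and takes essentially the same approach as the paper: both reduce to subsets with $e \notin A$ via the duality $\varepsilon(H^{A}) = \varepsilon(H^{A^{c}})$ (the paper phrases this as its case $e \in A$) and obtain the exponent by recognizing $A^{c}$ as the bar-amalgamation $A_{1}^{c} \oplus_{e} A_{2}^{c}$ and invoking Proposition \ref{pro4}. Your substitution into Theorem \ref{th2} merely streamlines the paper's direct recomputation of $v$, $e$, $f$, and $\sum n_{i}$ of the partial dual, and the caveat you flag (Proposition \ref{pro4} is stated for connected hypermaps, while $A_{1}^{c}$ and $A_{2}^{c}$ need not be connected) is one the paper itself only addresses in passing.
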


\begin{proof}
Let \( A_{1} = A \cap E(H_{1}) \) and \( A_{2} = A \cap E(H_{2}) \). The discussion regarding whether the edge \( e \) belongs to set \( A \) is presented below. There are two cases to consider.

\begin{enumerate}
  \item If \( e \notin A \), then we have \( A = A_{1} \cup A_{2} \), \( A^{c} = A_{1}^{c} \cup A_{2}^{c} \cup \{e\} \), and \( A_{1} \cap A_{2} = \emptyset. \) By Property \ref{property1}, it follows that:

\begin{equation}
    \begin{aligned}
        v[(H_{1}\oplus_e H_{2})^{A}]&=f(A)=f(A_{1})+f(A_{2})=v(H_{1}^{A_{1}})+v(H_{2}^{A_{2}}),\\
        e[(H_{1}\oplus_e H_{2})^{A}]&=e(H_{1}\oplus_e H_{2})=e(H_{1})+e(H_{2})+1=e(H_{1}^{A_{1}})+e(H_{2}^{A_{2}})+1,\\
        f[(H_{1}\oplus_e H_{2})^{A}]&=f(A^{c}),\\
        \sum\limits_{i=1}^{e[(H_{1}\oplus_e H_{2})^{A}]} n_{i}[(H_{1}\oplus_e H_{2})^{A}]&=\sum\limits_{i=1}^{e(H_{1}\oplus_e H_{2})} n_{i}(H_{1}\oplus_e H_{2})=\sum\limits_{i=1}^{e(H_{1})} n_{i}(H_{1})+\sum\limits_{i=1}^{e(H_{2})} n_{i}(H_{2})+m+n.\nonumber
    \end{aligned}
\end{equation}

If \( A^{c} = A_{1}^{c} \cup A_{2}^{c} \cup \{e\} \) is not connected, we can analyze the connected components. According to Proposition \ref{pro4}, we have:$
f(A^{c}) = f(A_{1}^{c}) + f(A_{2}^{c}) + m + n - 2k_{1} - 2k_{2} + 1 = f(H_{1}^{A_{1}}) + f(H_{2}^{A_{2}}) + m + n - 2k_{1} - 2k_{2} + 1
$, where \( k_1 \) denotes the number of distinct faces associated with the \( m \) corners in \( A^{c}\cap H_1 \), and \( k_2 \) represents the number of distinct faces linked to the \( n \) corners in \( A^{c}\cap H_2 \).

In conclusion,
$\chi[(H_{1}\oplus_e H_{2})^{A}]=v[(H_{1}\oplus_e H_{2})^{A}]+e[(H_{1}\oplus_e H_{2})^{A}]+f[(H_{1}\oplus_e H_{2})^{A}]-\sum\limits_{i=1}^{e[(H_{1}\oplus_e H_{2})^{A}]} n_{i}[(H_{1}\oplus_e H_{2})^{A}]=v(H_{1}^{A_{1}})+v(H_{2}^{A_{2}})+e(H_{1}^{A_{1}})+e(H_{2}^{A_{2}})+1+f(H_{1}^{A_{1}})+f(H_{2}^{A_{2}})+m+n-2k_{1}-2k_{2}+1-\sum                                     \limits_{i=1}^{e[(H_{1}\oplus_e H_{2})^{A}]}n_{i}[(H_{1}\oplus_e H_{2})^{A}]=\chi(H_{1}^{A_{1}})+\chi(H_{2}^{A_{2}})-2k_{1}-2k_{2}+2.$

\vspace{0.3cm}
Thus, we have
\begin{equation}
    \begin{aligned}
        \varepsilon[(H_{1}\oplus_e H_{2})^{A}]&=2c[(H_{1}\oplus_e H_{2})^{A}]-\chi[(H_{1}\oplus_e H_{2})^{A}]\\
                                            &=2c(H_{1}\oplus_e H_{2})-\chi(H_{1}^{A_{1}})-\chi(H_{2}^{A_{2}})+2k_{1}+2k_{2}-2\\
                                            &=2c(H_{1})+2c(H_{2})-2-\chi(H_{1}^{A_{1}})-\chi(H_{2}^{A_{2}})+2k_{1}+2k_{2}-2\\
                                            &=\varepsilon[(H_{1})^{A_{1}}]+\varepsilon[(H_{1})^{A_{1}}]+2k_{1}+2k_{2}-4.\nonumber
    \end{aligned}
\end{equation}

  \item If \( e \in A \), then we have \( A = A_{1} \cup A_{2} \cup \{e\} \) and \( A^{c} = A^{c}_{1} \cup A^{c}_{2} \). In this case, \( A \) corresponds to \( A^{c} \) as described in situation (1). Therefore, we can express the relationship as follows:
\[
\varepsilon[(H_{1}\oplus_e H_{2})^{A}] = \varepsilon[(H_{1})^{A_{1}}] + 2k_{1} + 2k_{2} - 4,
\]
where \( k_{1} \) denotes the number of distinct faces that contain the \( m \) corners located in \( A_{1} \), and \( k_{2} \) represents the number of distinct faces that include the \( n \) corners situated in \( A_{2}.\)
Thus, we have
 \begin{equation}
    \begin{aligned}
        \partial_{\varepsilon_{H_{1}\oplus_e H_{2}}}&=\sum\limits_{A\subseteq E(H_{1}\oplus_e H_{2})}z^{\varepsilon[(H_{1}\oplus_e H_{2})^{A}]}\\
                                                  &=2\sum\limits_{A\subseteq E(H_{1}\cup H_{2})}z^{\varepsilon[H_{1}^{A\cap E(H_{1})}]+\varepsilon[H_{2}^{A\cap E(H_{2})}]+2(k_{1}+k_{2}-2)}\nonumber
    \end{aligned}
 \end{equation}

\end{enumerate}

\end{proof}

\section{Subdivision of a hyperedge in a hypermap}
In this section, we present the operation of subdivision for hypermaps, along with the corresponding partial-dual Euler-genus polynomials that are associated with this operation for a hyperedge \( e \) where \( n(e) = 3 \).

\begin{defn}
\label{def10}
Given a connected hypermap \( H \), for any hyperedge \( e \) in \( H \), if the number of vertices incident to \( e \) is denoted as \( n(e) = k \), then the subdivision of the hyperedge \( e \), represented by \( \bar{H} \), can be described as follows:
\begin{enumerate}
  \item Introduce a new vertex, denoted as \( u \);
  \item Select any \( k \) vertices from the hyperedge \( e \) and construct a new hyperedge that includes the vertex \( u \);
  \item Remove the original hyperedge \( e \).
\end{enumerate}
\end{defn}
By definition, we have $v(\bar{H})=v(H)+1,\; e(\bar{H})=e(H)+\binom{k}{k-1}-1=e(H)+k-1$.

Figure \ref{p16} illustrates a subdivision of a hyperedge $e$ within a hypermap $H$, where the number of elements in the hyperedge $3$.

\begin{figure}[H]
    \centering
     \includegraphics[width=4.5in]{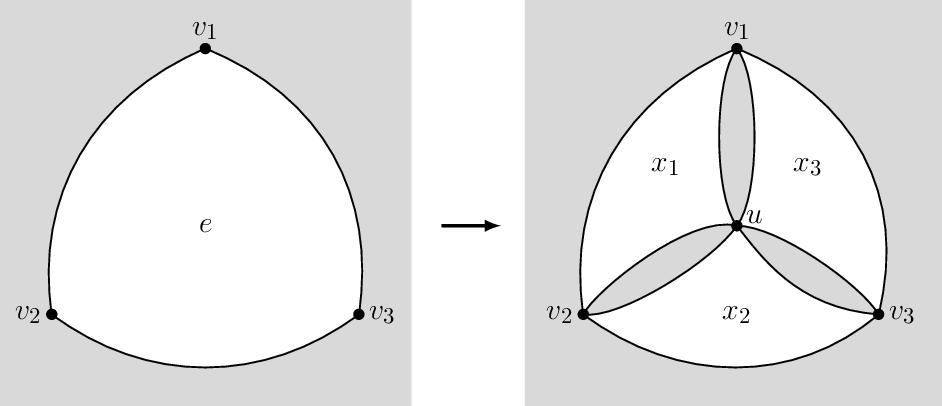}
\caption{A subdivision of a hyperedge $e$ with $n(e)=3$.}
    \label{p16}

\end{figure}

\begin{lemma}\label{th6}
Given a connected hypermap \( H \), let \( e \in E(H) \) with \( n(e) = 3 \). Then, we have
\(
\varepsilon(\bar{H}) = \varepsilon(H).
\)\end{lemma}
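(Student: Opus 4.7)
The plan is to verify the equivalent statement $\chi(\bar H) = \chi(H)$; since the subdivision in Definition~\ref{def10} is a purely local modification in a small neighbourhood of the hyperedge $e$, the resulting hypermap $\bar H$ is again connected, and the identity $\varepsilon = 2c - \chi$ immediately promotes $\chi$-invariance to $\varepsilon$-invariance.

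First I would record the straightforward changes. From Definition~\ref{def10} we already have $v(\bar H) = v(H) + 1$ and $e(\bar H) = e(H) + 2$, and the hyperedge-degree sum $\sum_i n_i$ changes by a fixed quantity $\Delta$ that one reads off directly from the definition: the deleted hyperedge removes the contribution $n(e)=3$, and the three new hyperedges add their degrees. Substituting these counts into the formula of Definition~\ref{def3}, the desired identity $\chi(\bar H) = \chi(H)$ collapses to a single assertion about the face count, namely
\[
f(\bar H) - f(H) = \Delta - 3.
\]

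The heart of the argument is establishing this face-count identity, and for this I would pass to the bipartite-graph model of Section~1.3. The remark after Definition~\ref{def3} gives $f(H) = f(G_H)$ and $f(\bar H) = f(G_{\bar H})$, so it is enough to understand how the embedded bipartite graph $G_H$ changes under subdivision. In $G_H$ the hyperedge $e$ is a single vertex of degree $3$ joined by three edges to $v_1, v_2, v_3$, while in $G_{\bar H}$ this vertex and its three incident edges are removed and replaced by a small patch containing the new vertex $u$ together with the new hyperedge-vertices, with all three connections to $v_1, v_2, v_3$ preserved. Because the modification is confined to a disc neighbourhood, every face-boundary walk of $G_H$ not passing near $e$ is unchanged, and the net change in $f$ can be extracted from the local Euler characteristic of the patch alone, via $v - e + f$ on the disc.

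The main obstacle will be making this local patch analysis completely precise: one must specify the rotations at $u$ and at the new hyperedge-vertices (these are induced implicitly by Definition~\ref{def10}), trace the three face-boundary walks that previously ran through the corners at $e$, and verify that after the replacement they reorganize into exactly $\Delta - 3$ additional boundary components. I would carry this out using the arrow presentation of $\bar H$, which makes the local replacement entirely explicit: the three marking arrows labelled $e$ are deleted, while a new vertex-disk $u$ is glued in together with marking arrows labelled $e_1, e_2, e_3$, and the boundary walks can then be followed by hand inside a small disc neighbourhood. Once the face identity is established, substitution into Definition~\ref{def3} yields $\chi(\bar H) = \chi(H)$ and hence $\varepsilon(\bar H) = \varepsilon(H)$.
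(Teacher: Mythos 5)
Your proposal is correct and follows essentially the same route as the paper: both reduce $\varepsilon$-invariance to $\chi$-invariance via $\varepsilon = 2c-\chi$ and connectedness, tally the changes $v(\bar H)=v(H)+1$, $e(\bar H)=e(H)+2$, $\sum_i n_i(\bar H)=\sum_i n_i(H)+6$, and thereby collapse the claim to the single face-count identity $f(\bar H)-f(H)=3$. The only difference is presentational: the paper settles that face count by appeal to the local picture in Figure \ref{p16} (after noting $f'\leq 3$), whereas you propose to verify it by explicitly tracing the boundary walks in the arrow presentation, which is the same local argument carried out in more detail.
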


\begin{proof}
Let \( G(\bar{H}) \) be the bipartite graph corresponding to \( \bar{H} \). According to Definition \ref{def10}, we have:
\[
v(\bar{H}) = v(H) + 1, \quad e(\bar{H}) = e(H) + 2, \quad f' = f(\bar{H}) - f(H),
\]
and it follows that \( f' \leq 3. \)

By Definition \ref{def3}, we have
\begin{equation}
    \begin{aligned}
        \chi(\bar{H})&=v(\bar{H})+f(\bar{H})+e(\bar{H})-\sum\limits_{i=1}^{e(\bar{H})}n_{i}(\bar{H})\\
                           &=v(H)+1+f(H)+f'+e(H)+2-\sum\limits_{i=1}^{e(H)}n_{i}(H)-6\\
                           &=\chi(H)+f'-3\nonumber.
    \end{aligned}
\end{equation}

Since the subdivision for \( e \) is illustrated in Figure \ref{p16}, it follows that \( f' = 3 \). Therefore, we have \( \chi(\bar{H}) = \chi(H) \), which implies that \( \varepsilon(\bar{H}) = \varepsilon(H) \).\end{proof}

\begin{thm}\label{co1}
Given a connected hypermap \( H \) with an edge \( e \in E(H) \) such that \( n(e) = 3 \), we have the following expression for the subdivision operator:

\[
\partial_{\varepsilon_{\bar{H}}}(z) = \sum_{A \subseteq E(H)} (a_i z^{\varepsilon[H^A]} + b_i z^{\varepsilon[H^A] + 2} + c_i z^{\varepsilon[H^A] + 4}),
\]

where it holds that

\[
\sum_{i=1}^{2^{e(H)}} (a_i + b_i + c_i) = 2^{e(H)+2},
\]
with the conditions \( a_i, b_i, c_i \geq 0. \)
\end{thm}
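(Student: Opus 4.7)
The plan is to construct a mapping $\phi$ from subsets $B\subseteq E(\bar{H})$ to subsets $A\subseteq E(H)$ such that $\varepsilon(\bar{H}^B) - \varepsilon(H^{\phi(B)})\in\{0,2,4\}$ for every $B$, and such that $\phi$ is $4$-to-$1$ onto $2^{E(H)}$. Defining $a_i, b_i, c_i$ to be the number of preimages of $A_i$ realizing shift $0$, $2$, $4$ respectively will then yield the claimed expansion of $\partial_{\varepsilon_{\bar{H}}}(z)$, and the $4$-to-$1$ property of $\phi$ immediately gives $\sum_i(a_i+b_i+c_i) = 4\cdot 2^{e(H)} = 2^{e(H)+2}$.

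Since $E(\bar{H}) = (E(H)\setminus\{e\})\cup\{e_1',e_2',e_3'\}$, every $B$ decomposes uniquely as $B = A_0\cup C$ with $A_0\subseteq E(H)\setminus\{e\}$ and $C\subseteq\{e_1',e_2',e_3'\}$. I would set $\phi(B) = A_0$ when $|C|\le 1$ and $\phi(B) = A_0\cup\{e\}$ when $|C|\ge 2$, so that each $A\subseteq E(H)$ has exactly four preimages: the four subsets $B$ whose $C$-size lies in the prescribed range.

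To control the shift, I would apply Theorem~\ref{th2} to both $\chi(\bar{H}^B)$ and $\chi(H^{\phi(B)})$, using $v(\bar{H}) = v(H)+1$, to obtain
\[
\varepsilon(\bar{H}^B) - \varepsilon(H^{\phi(B)}) = 2 - \bigl[\chi(B)+\chi(B^c)-\chi(\phi(B))-\chi(\phi(B)^c)\bigr].
\]
For the endpoint cases $|C|\in\{0,3\}$, partial duality restricted to the subdivided region commutes with the subdivision operation up to a genus-preserving identification, so Lemma~\ref{th6} (subdivision preserves Euler-genus) forces shift $0$. For the intermediate cases $|C|\in\{1,2\}$, partially dualizing one or two of the new hyperedges $e_i'$ alters the number of faces of $\bar{H}^B$ relative to $H^{\phi(B)}$ by an amount that can be tracked via the arrow presentation of $\bar{H}$ in a neighborhood of the new vertex $u$ and the original vertices $v_1,v_2,v_3$ of $e$.

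The main obstacle is precisely these six intermediate subcases, since the local change in $\chi(B)+\chi(B^c)$ depends both on which new hyperedges lie in $C$ and on how the hyperedges of $A_0$ are incident to $v_1,v_2,v_3$. A systematic case analysis, paralleling the face-count argument in the proof of Lemma~\ref{th6}, is needed to verify in each subcase that the shift falls in $\{0,2,4\}$; combined with the endpoint cases this completes the proof.
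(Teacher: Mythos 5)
Your counting framework is sound and in fact slightly cleaner than the paper's: your map $\phi$ is $4$-to-$1$ onto all of $2^{E(H)}$, which matches the theorem's indexing $\sum_{i=1}^{2^{e(H)}}$, whereas the paper works with the $8$-to-$1$ projection $\bar{A}\mapsto \bar{A}\cap(E(H)\setminus\{e\})$ onto the $2^{e(H)-1}$ subsets avoiding $e$. Your reduction of $|C|\in\{2,3\}$ to $|C|\in\{0,1\}$ also works, since $\varepsilon(\bar{H}^{B})=\varepsilon(\bar{H}^{B^{c}})$ and $\varepsilon(H^{\phi(B)})=\varepsilon(H^{\phi(B)^{c}})=\varepsilon(H^{\phi(B^{c})})$, so the two halves of the fibers contribute identical shifts; this is the same complementation symmetry the paper invokes.

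The genuine gap is that the heart of the argument --- showing the shift lies in $\{0,2,4\}$ when $|C|=1$ --- is announced but not carried out. You propose to track face counts of $\bar{H}^{B}$ via the arrow presentation near $u,v_1,v_2,v_3$, and you correctly flag that the answer depends on how the hyperedges of $A_0$ attach to $v_1,v_2,v_3$; but that is precisely where the proof lives, and a face-count analysis on the partial dual is considerably harder to control than what the paper actually does. The paper first converts the problem entirely into \emph{connected-component} counts: combining Theorem~\ref{th3} with Lemma~\ref{th6} gives
\[
\varepsilon(\bar{H}^{\bar{A}})-\varepsilon(H^{A}) \;=\; 2\bigl[c(A)+c(A^{c})\bigr]-2\bigl[c(\bar{A})+c(\bar{A}^{c})\bigr]+2,
\]
after which the only question is how $c(A)+c(A^{c})$ changes under subdivision. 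For $|C|=0$ the new vertex $u$ is isolated in $\bar{A}$, so the bracket increases by exactly $1$ and the shift is $0$; for $|C|=1$ the paper splits into three cases according to whether $v_1,v_2,v_3$ lie in three, two, or one component of $A$, obtaining a change of $-1$ or $0$ in the bracket and hence shifts $2$ or $4$ (together with $0$). You should replace your proposed face-count analysis with this component-count identity; without it, or without actually executing the six intermediate subcases you defer, the claim that every shift is $0$, $2$, or $4$ --- and hence the nonnegativity and the very form of the polynomial --- remains unproved. Your endpoint assertion that $|C|=3$ ``forces shift $0$'' by a commutation of subdivision with duality is likewise unjustified as stated; it is better obtained from the complementation symmetry above, reducing it to the $|C|=0$ case.
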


\begin{proof}
 Let the hyperedge \( e \) be \(\{v_{1}, v_{2}, v_{3} \} \). Upon subdividing the hyperedge \( e \), we introduce new hyperedges and a new vertex, denoted as \( x_{1}, x_{2}, x_{3} \), and \( u \), respectively. Consequently, it follows that \( v(\bar{H}) = v(H) + 1 \) and \( e(\bar{H}) = e(H) + 2 \).
Consider any subset of hyperedges \( \bar{A} \subseteq E(\bar{H}) \). We can then define the corresponding set \( A = \{e_{i} \in E(H) | e_{i} \in \bar{A}\} \). It follows that \( A\subseteq\bar{A} \) and that the hyperedge \( e\) is not included in set \( A\).

By Theorem \ref{th3} and Lemma \ref{th6}, we have

\[
\varepsilon(\bar{H}^{\bar{A}}) - \varepsilon(H^{A}) = 2[c(A) + c(A^{c})] - 2[c(\bar{A}) + c(\bar{A}^{c})] + 2.
\]

Let us denote \( k = c(A) + c(A^{c}) \) and \( \bar{k} = c(\bar{A}) + c(\bar{A}^{c}) \). Thus, the equation can be reformulated as

\[
\varepsilon(\bar{H}^{\bar{A}}) - \varepsilon(H^{A}) = 2(k - \bar{k}) + 2.
\]


According to the symmetry of the partial duality of a hypermap, where $\varepsilon(H^{A})=\varepsilon(H^{A^c})$, we will examine two cases as follows.
 \begin{enumerate}
   \item If $\{x_{1}, x_{2}, x_{3}\} \cap \bar{A} = \emptyset$. Since \( e \notin A \), the vertex \( v_{i} \) (where \( 1 \leq i \leq 3 \)) belongs to the same connected component in both sets \( A \) and \( \bar{A} \). Furthermore, the new vertex $u$ is an isolated vertex in $\bar{A}$. Consequently, we have that \( c(\bar{A}) = c(A) + 1 \) and \( c(\bar{A}^{c}) = c(A^{c}) \). In other words, it follows that \( \bar{k} - k = 1. \)

   \item If $\bar{A}$ contains only one element from the set $\{x_{1}, x_{2}, x_{3}\}$, we categorize the number of connected components that include $v_{1}$, $v_{2}$, and $v_{3}$ in $A$ into the following three cases.
 \begin{description}
   \item[Case 1] When \( v_{1}, v_{2} \), and \( v_{3} \) belong to three distinct components within set \( A \), as illustrated in Figure \ref{p17}, we find that \( k - \bar{k} = 3 - 2 = 1 \). \begin{figure}[H]
    \centering
    \includegraphics[width=4in]{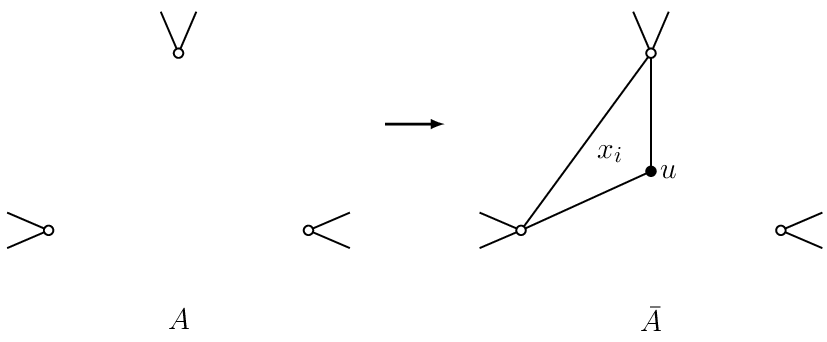}
    \caption{The three vertices \( v_{1} \), \( v_{2} \), and \( v_{3} \) are associated with three distinct components }
    \label{p17}
\end{figure}
  \item [Case 2]One vertex among \( v_1, v_2, v_3 \) lies in one component, while the other two vertices lie in a different component, as illustrated in Figure \ref{p18}. In this case, we can deduce that \( k - \bar{k} = 2 - 1 = 1 \), or equivalently \( k_{1} - k_{2} = 2 - 2 = 0 \).
  \begin{figure}[H]
    \centering
    \includegraphics[width=6in]{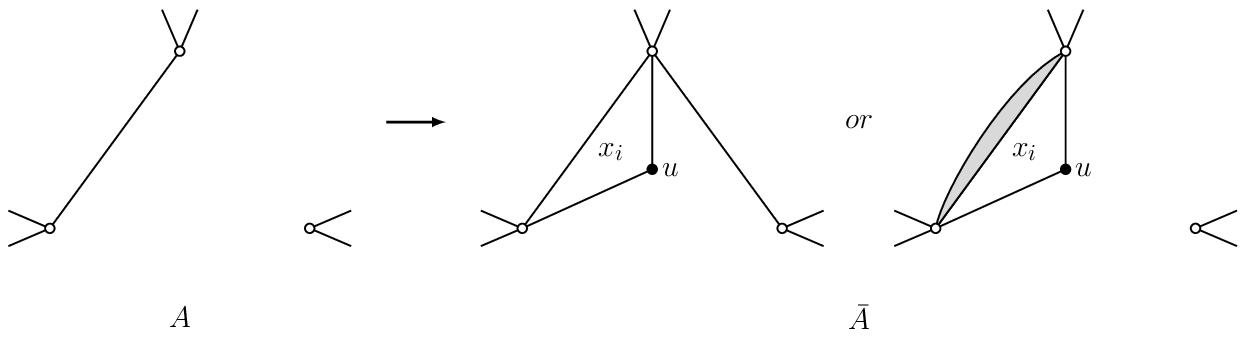}
    \caption{The three vertices $v_{1},v_{2}$ and $v_{3}$ belong to two different components.}
    \label{p18}
\end{figure}
\item [Case 3] When there is only one component that includes \(v_{1}, v_{2}\), and \(v_{3}\) in set \(A\), as illustrated in Figure \ref{p19}, we observe that \(k - \bar{k} = 1 - 1 = 0\).
 \begin{figure}[H]
    \centering
    \includegraphics[width=6in]{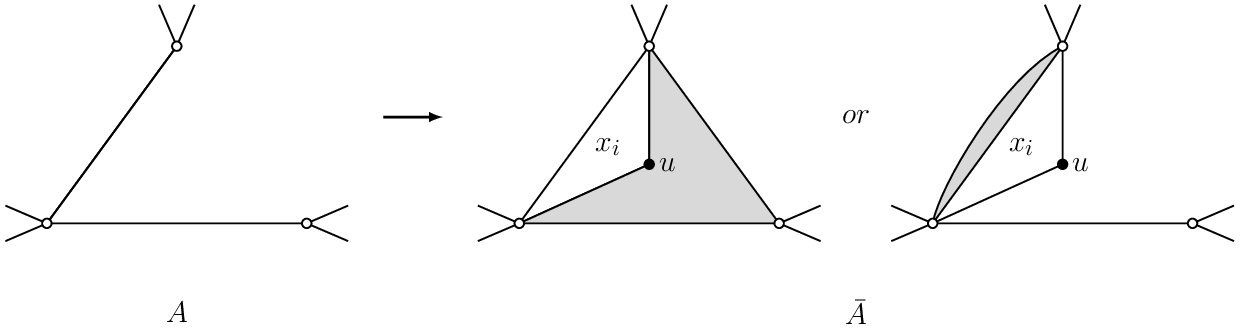}
    \includegraphics[width=4in]{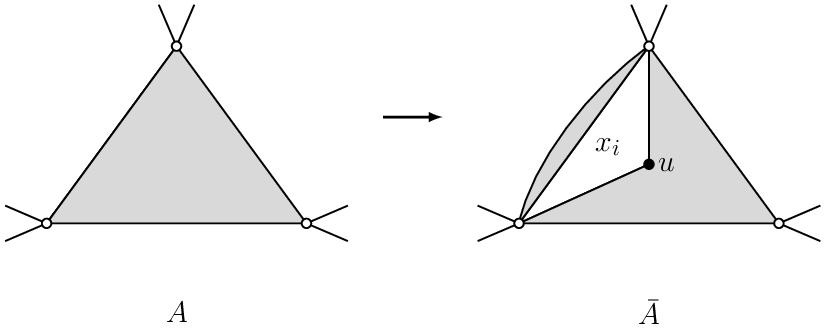}
    \caption{Only one component including vertices $v_{1},v_{2}$ and $v_{3}$.}
    \label{p19}
\end{figure}

 \end{description}
  \end{enumerate}

In conclusion, the expression $\varepsilon[(\bar{H})^{\bar{A}}]-\varepsilon(H^{A})$ can yield values of $0$, $2$, or $4$, which is represented as follows:

$$2(k-\bar{k}) + 2 = 0,\, 2 \text{ or } 4.$$

Since \( e \notin A \), there are \( 2^{e(H)-1} \) ways to select the set \( A \). Additionally, there are \( 2^{e(\bar{H})} = 2^{e(H)+2} \) ways to choose the set \( \bar{A} \). Therefore, we can express this as:

\[
\partial_{\varepsilon_{\bar{H}}}(z) = \sum\limits_{\bar{A}\subseteq E(\bar{H})} z^{\varepsilon[\bar{H}^{\bar{A}}]} = \sum\limits_{ e\notin A\subseteq E(H)} (a_{i} z^{\varepsilon[H^{A}]} + b_{i} z^{\varepsilon[H^{A}] + 2} + c_{i} z^{\varepsilon[H^{A}] + 4})
\]

where it holds that

\[
\sum_{i=1}^{2^{e(H)-1}} (a_{i}+b_{i}+c_{i}) = 2^{e(H)+2}, \quad a_{i}, b_{i}, c_{i}\geq 0.
\]\end{proof}

 \section{Applications}

\subsection{Hypertrees}
\begin{defn}\label{def9}Given a connected hypergraph $T,$ if the removal of any hyperedge from $T$ results in a disconnected hypergraph, then $T$ is defined as a hyper-tree.
\end{defn}
The classification of hypertrees can be categorized into two distinct types: hypertrees that contain cycles and those that are free of cycles.

\begin{thm}\label{th7}Let $T$ be a hypertree map without cycles. Then, the partial-dual Euler-genus polynomial of the hypertree map is given by

\[
\partial_{\varepsilon_{T}}(z) = 2^{e(T)}.
\]
\end{thm}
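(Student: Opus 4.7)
The plan is to show that for every subset $A \subseteq E(T)$, the partial dual $T^A$ satisfies $\varepsilon(T^A) = 0$. Once this is established, each of the $2^{e(T)}$ terms in the sum defining $\partial_{\varepsilon_T}(z)$ contributes $z^0 = 1$, and the theorem follows immediately. The main tool is the Euler-genus formula of Theorem \ref{th3}:
\[
\varepsilon(T^A) = \varepsilon(A) + \varepsilon(A^c) + 2[c(T) - c(A) - c(A^c)] + 2v(T).
\]
So there are two things to verify: that the Euler genera $\varepsilon(A)$ and $\varepsilon(A^c)$ both vanish, and that the component count $c(A) + c(A^c)$ equals $v(T) + 1$.

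For the first claim, I would use the observation from the background section that a connected hypergraph is cycle-free precisely when its associated bipartite graph $G_H$ is a tree. Any sub-hypermap $A$ of the cycle-free hypertree $T$ inherits this property componentwise: the bipartite graph of each connected component is a subtree of $G_T$, hence a tree. For a connected component $C$ whose bipartite graph is a tree, one has $f(C) = 1$, and a direct computation using $\chi(C) = v(C) + e(C) + f(C) - \sum n_i(C)$ together with the tree identity $\sum n_i(C) = v(C) + e(C) - 1$ gives $\chi(C) = 2$, so $\varepsilon(C) = 0$. Summing over components, $\varepsilon(A) = 0$, and the same argument applied to $A^c$ gives $\varepsilon(A^c) = 0$.

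For the second claim, I would exploit the same bipartite-tree identity globally. Writing $N(X) := \sum_{e_i \in X} n_i(X)$ for a set of hyperedges $X$, the componentwise identity $N(C) = v(C) + e(C) - 1$, summed over all components of $A$ (including isolated vertices, which contribute $0 = 0 + 0 - 0$ to a trivially adjusted count), yields
\[
N(A) = v(T) + |A| - c(A),
\]
and an identical relation $N(A^c) = v(T) + |A^c| - c(A^c)$. Adding these and using $N(A) + N(A^c) = N(T) = v(T) + e(T) - 1$ together with $|A| + |A^c| = e(T)$ gives
\[
c(A) + c(A^c) = 2v(T) + e(T) - N(T) = v(T) + 1,
\]
as desired.

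Substituting $\varepsilon(A) = \varepsilon(A^c) = 0$, $c(T) = 1$, and $c(A) + c(A^c) = v(T) + 1$ into the genus formula gives $\varepsilon(T^A) = 2[1 - (v(T)+1)] + 2v(T) = 0$. Summing over all $2^{e(T)}$ subsets then yields $\partial_{\varepsilon_T}(z) = 2^{e(T)}$. The principal conceptual hurdle is recognizing and exploiting the correspondence between cycle-free hypertrees and their bipartite tree representations; once this is in place, both vanishing statements and the component count reduce to one-line identities about forests.
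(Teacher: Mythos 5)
Your proof is correct, but it takes a genuinely different route from the paper. The paper's own argument is a two-line reduction: it observes that a cycle-free hypertree map is built up by iterated join operations from its single-hyperedge pieces (each of which has partial-dual polynomial $2$, since a lone hyperedge and its dual are both planar), and then invokes the multiplicativity of the partial-dual Euler-genus polynomial under join (Theorem~\ref{th4}) to get $2^{e(T)}$. You instead prove the pointwise statement that $\varepsilon(T^{A})=0$ for every $A\subseteq E(T)$ by feeding forest identities into the genus-change formula of Theorem~\ref{th3}: the bipartite graphs of $A$ and $A^{c}$ are spanning subforests of the tree $G_{T}$, so $\varepsilon(A)=\varepsilon(A^{c})=0$, and the edge count $e(G_{X})=v(G_{X})-c(G_{X})$ for forests gives $c(A)+c(A^{c})=v(T)+1$, which makes the formula collapse to $0$. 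Both arguments are sound, but yours is the more self-contained one: the paper's proof silently assumes that the given embedding of $T$ actually decomposes as an iterated join, i.e.\ that at each shared vertex the edge-ends of distinct hyperedges are not interleaved in the bi-rotation, whereas your computation depends only on the quantities appearing in Theorem~\ref{th3} and therefore works verbatim for an arbitrary (even non-orientable) embedding of a cycle-free hypertree. The price is a longer argument; the paper's route is shorter and reuses machinery already developed for joins. One cosmetic slip: an isolated vertex contributes $0=1+0-1$ to the identity $N(C)=v(C)+e(C)-1$, not $0=0+0-0$, though your global form $N(A)=v(T)+|A|-c(A)$ is exactly the forest identity and is correct as stated.
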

\begin{proof}In this context, the hypertree exhibits a structure analogous to that of trees within graphs. It can be derived by executing join operations on its connected components. According to Theorem \ref{th4}, this theorem is validated.
\end{proof}

The subsequent property is self-evident.
\begin{thm}\label{th8}
Let $H$ be a connected hyper-map, for each hyperedge $e\in E(H)$, adding a vertex to the hyperedge $e$ will not change the Euler-genus of the hypermap.
\end{thm}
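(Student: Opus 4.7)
The plan is to verify invariance of $\chi$ by tracking each of the four quantities appearing in Definition~\ref{def3}. Write $H'$ for the hypermap obtained from $H$ by attaching a brand-new vertex $u$ to the hyperedge $e$. In the bipartite-graph model of Walsh, $H'$ corresponds to $G_H$ together with a single pendant edge joining $u$ to the bipartite-vertex representing $e$; equivalently, in the arrow presentation of Section~2.4, we simply place a new vertex-disk inside some face bordered by $e$ and draw one fresh marking arrow labeled $e$ on it.

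First, I would record the effect of this move on each invariant. Since $u$ is incident only to $e$, we have
\begin{equation*}
v(H') = v(H) + 1, \qquad e(H') = e(H), \qquad \sum_{i=1}^{e(H')} n_i(H') = \sum_{i=1}^{e(H)} n_i(H) + 1,
\end{equation*}
and connectedness is preserved because $u$ is attached to $H$ through $e$, so $c(H') = c(H) = 1$. The crux is therefore to show $f(H') = f(H)$.

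This follows immediately from the bipartite picture: inserting a pendant edge into an embedded graph cannot change the number of faces, because a pendant edge is traversed twice while walking the boundary of the face that contains it and hence creates only a slit. Thus $f(G_{H'}) = f(G_H)$, and by the remark following Definition~\ref{def3} this yields $f(H') = f(H)$. Combining the four updates gives
\begin{equation*}
\chi(H') - \chi(H) = 1 + 0 + 0 - 1 = 0,
\end{equation*}
and since $c(H') = c(H)$, the identity $\varepsilon = 2c - \chi$ used in the proof of Theorem~\ref{th3} lets us conclude $\varepsilon(H') = \varepsilon(H)$.

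The only step that is not bookkeeping is the face-count invariance, and this is where I expect the main obstacle to lie, since it depends on what ``adding a vertex to $e$'' is taken to mean at the combinatorial level. The cleanest justification is the topological pendant-edge argument above, but if a permutation-level check is preferred one can argue directly from the bi-rotation system: the insertion introduces a single new $2$-orbit into $\tau$ for the degree-$1$ vertex $u$ and inserts two new symbols into the $\psi$-cycle associated with $e$, and a short orbit-counting calculation on $\psi \circ \tau$ shows that the number of face orbits is preserved. Either route closes the proof.
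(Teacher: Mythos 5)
Your proof is correct. The paper itself offers no argument here --- it simply declares the statement ``self-evident'' --- so there is nothing to compare against; your write-up supplies the justification the paper omits, and it is the natural one. The bookkeeping $v\mapsto v+1$, $e\mapsto e$, $\sum n_i\mapsto \sum n_i+1$ together with $f(H')=f(H)$ gives $\chi(H')=\chi(H)$ via Definition~\ref{def3}, and the one non-trivial step, face-count invariance, is correctly reduced to the standard fact that a pendant edge in a cellularly embedded graph (here, the edge $ue$ of the Walsh bipartite graph $G_{H'}$) is traversed twice on the boundary walk of a single face and therefore neither splits nor merges faces, regardless of which corner of the disk of $e$ receives the new marking arrow. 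Combined with $c(H')=c(H)$ and $\varepsilon=2c-\chi$, this closes the argument; your remark that the same check can be done at the level of orbits of $\psi\circ\tau$ is a reasonable alternative but not needed.
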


\begin{example}
 Given a hypertree \( T = (V(T), E(T)) \), the vertex set \( V(T) \) consists of the vertices \( u, v_{1}, v_{2}, \ldots, v_{n-1}, x_{1}, x_{2}, \ldots, x_{n} \). The hyperedge set is defined as \( E(T) = \{ e_{1}, e_{2}, \ldots, e_{n} \} \), where each hyperedge is specified as follows: for \( i = 1, 2, \ldots, n-1\), we have \( e_{i} = \{ u, v_{i}, x_{i} \} \); and for the last hyperedge, we define \( e_{n} = \{ v_{1}, v_{2}, ..., v_{n-1}, x_n\}.\) Notably, all vertices \( x_1,x_2,\ldots,x_n\) are of degree one.

Figure \ref{p15} illustrates  the hypertree map \( T\). Consequently, the partial-dual Euler-genus polynomial of $T$ can be expressed as follows:
\[\partial_{\varepsilon_{T}}(z)=\begin{cases}
2+2z^{2n-4}+2\sum\limits_{i=1}^{[\frac{n}{2}]}\binom{n-1}{i}z^{2i}+2\sum\limits_{i=2}^{[\frac{n}{2}]}\binom{n-1}{i-1}z^{2n-2i}&\text{$n$ is odd},\\
2+2z^{2n-4}+2\sum\limits_{i=1}^{\frac{n}{2}-1}\binom{n-1}{i}z^{2i}+2\sum\limits_{i=2}^{\frac{n}{2}-1}\binom{n-1}{i-1}z^{2n-2i}+[\binom{n-1}
{\frac{n}{2}}+\binom{n-1}{\frac{n}{2}-1}]z^{n}&\text{$n$ is even}.
\end{cases}\]
\end{example}
\begin{figure}[H]
    \centering
     \includegraphics[width=2.6in]{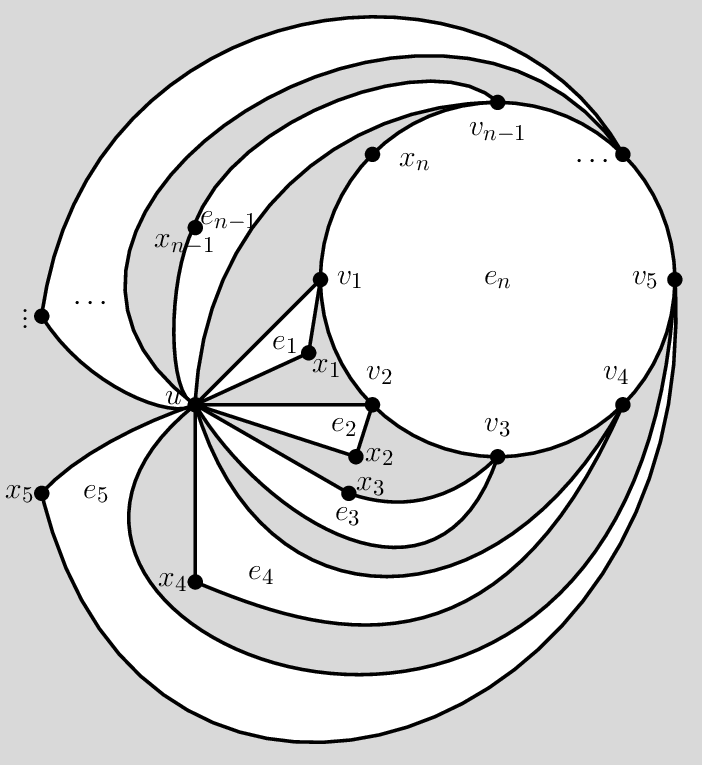}
\caption{A hypertree map with one cycle.}
    \label{p15}

\end{figure}
\begin{proof}

According to Theorem $\ref{th3}$, we can deduce that $\varepsilon(T^{A})=2[1-c(A)-c(A^{c})]+4n$, where $A\subseteq E(T)$. There are two distinct cases to consider.
\begin{itemize}
  \item [(1)] When \( e_{n} \notin A \), if \( e(A) = n - 1 \), then it follows that \( c(A) = 2 \) and \( c(A^{c}) = n + 1 \). Therefore, we have:
\[
c(A) + c(A^{c}) = n + 3.
\]
Conversely, if \( 0 \leq e(A) < n - 1 \), then we find that \( c(A) = 2n - 2e(A) \) and \( c(A^{c}) = e(A) + 1 \). Thus, in this case:
\[
c(A) + c(A^{c}) = 2n - e(A) + 1.
\]
  \item  [(2)] When \( e_{n} \in A \), if \( e(A) = 1 \), it is analogous to the case where \( e(A) = n - 1 \) as described in Case (1) above. Next, let us examine the case where \(2 \leq e(A) \leq n\).
Given that \(e(A) + e(A^{c}) = n\), we can derive that \(c(A) = n - e(A) + 1\) and \(c(A^{c}) = n - e(A^{c}) + [e(A) - 1] + 1 = 2e(A)\). Consequently, we find that \(c(A) + c(A^{c}) = n + e(A) + 1\).
\end{itemize}

In conclusion,
\[\varepsilon(T^{A})=\begin{cases}
2e(A)&\text{$e_{n}\notin A$ and $0\leq e(A)<n-1$},\\
2n-4&\text{$e_{n}\in A$ and $e(A)=1$ or $e_{n}\notin A$ and $e(A)=n-1$ },\\
2n-2e(A)&\text{$e_{n}\in A$ and $2\leq e(A)\leq n$}.
\end{cases}\]

Thus, we have
\[\partial_{\varepsilon_{T}}(z)=2+2z^{2n-4}+\sum\limits_{e_{n}\notin A,2\leq e(A)<n-1}z^{2e(A)}+\sum\limits_{e_{n}\in A,2\leq e(A)\leq n-1}z^{2n-2e(A)}\]
This can be expressed as  \[\partial_{\varepsilon_{T}}(z)=\begin{cases}
2+2z^{2n-4}+2\sum\limits_{i=1}^{[\frac{n}{2}]}\binom{n-1}{i}z^{2i}+2\sum\limits_{i=2}^{[\frac{n}{2}]}\binom{n-1}{i-1}z^{2n-2i}&\text{$n$ is odd},\\
2+2z^{2n-4}+2\sum\limits_{i=1}^{\frac{n}{2}-1}\binom{n-1}{i}z^{2i}+2\sum\limits_{i=2}^{\frac{n}{2}-1}\binom{n-1}{i-1}z^{2n-2i}+[\binom{n-1}
{\frac{n}{2}}+\binom{n-1}{\frac{n}{2}-1}]z^{n}&\text{$n$ is even}.
\end{cases}\]

\end{proof}

Let \( T_{n} \) be a \textit{4-uniform} hypertree map with \( E(T_{n}) = \{ e_{i} = (x_{2i-1}, x_{2i}, x_{2i+1}, x_{2i+2}) | i=1, 2, \ldots, n \} \), as illustrated in Figure \ref{p13}. Upon the removal of all vertices with degree 1 in \( T_{n} \), specifically \( x_{1}, x_{2}, x_{2n+1}, x_{2n+2} \), the resulting hypermap is denoted by \( H_{n}\) and is referred to as a hyper-ladder map. According to Theorem \ref{th8}, the partial-dual Euler-genus polynomial of \(H_{n} \) is equivalent to that of \(T_{n} \).
\begin{figure}[H]
    \centering

     \includegraphics[width=4in]{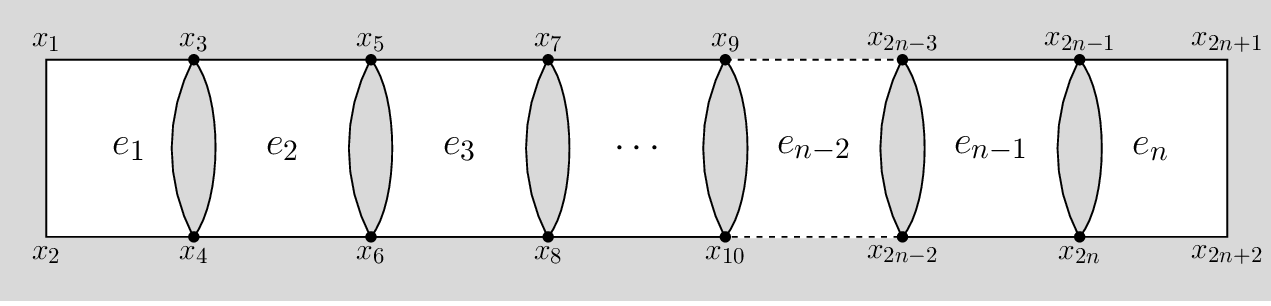}
     \includegraphics[width=4in]{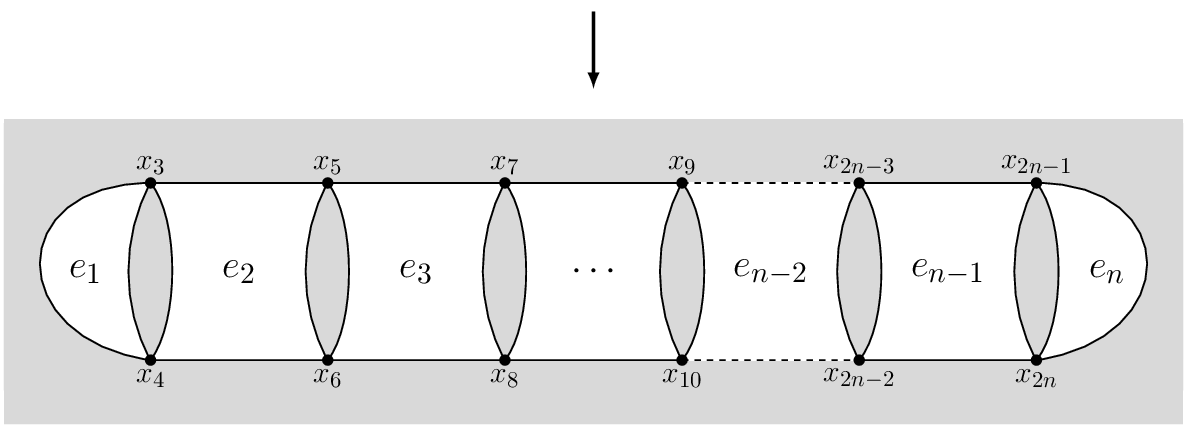}
    \caption{Two hypermaps $T_{n}$ (above) and  $H_{n}$ (below).}
    \label{p13}
\end{figure}

\begin{thm}\label{th5}
The partial-dual Euler-genus polynomial of the hyper-ladder map is $\partial_{\varepsilon_{H_{n}}}(z)=2(1+z^{2})^{n-1}$, where $n$ is a positive integer.
\end{thm}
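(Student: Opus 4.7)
The plan is to prove the theorem by induction on $n$, establishing the recursion
\[
\partial_{\varepsilon_{H_n}}(z) = (1+z^2)\,\partial_{\varepsilon_{H_{n-1}}}(z)
\]
from the base case $\partial_{\varepsilon_{H_2}}(z) = 2(1+z^2)$. The first observation is that $H_n$ is planar: its associated bipartite graph is a chain of $n-1$ quadrilaterals glued successively at their hyperedge-nodes, which is manifestly embeddable in the plane. Consequently, every spanning sub-hypermap $A \subseteq E(H_n)$ is planar as well, so $\varepsilon(A) = \varepsilon(A^c) = 0$. Invoking Theorem~\ref{th3} with $c(H_n) = 1$ and $v(H_n) = 2n-2$ collapses the partial-dual Euler-genus to
\[
\varepsilon(H_n^A) = 4n - 2 - 2\bigl(c(A) + c(A^c)\bigr),
\]
so the task reduces to tracking $c(A) + c(A^c)$ as $A$ ranges over $E(H_n)$.

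To obtain the recursion, I would pair up the $2^n$ subsets of $E(H_n)$: for each $A \subseteq \{e_2, \ldots, e_n\}$, pair $A$ with $A \cup \{e_1\}$. The ``column'' $C_1 := \{x_3, x_4\}$ is exactly the vertex set of $e_1$, and after removing the degree-one vertices these two vertices appear in no hyperedge of $H_n$ other than $e_1$ and $e_2$. Hence toggling $e_1$'s membership in a subset $B$ merges the two isolated vertices of $C_1$ when $e_2 \notin B$ (decreasing $c(B)$ by $1$) and leaves $c(B)$ unchanged when $e_2 \in B$. Since exactly one of $A$ and $A'' := \{e_2, \ldots, e_n\} \setminus A$ contains $e_2$, a short case analysis yields that each pair contributes
\[
(1 + z^2)\cdot z^{\,4n - 2 - 2\bigl(c_{H_n}(A) + c_{H_n}(A'')\bigr)}
\]
to $\partial_{\varepsilon_{H_n}}(z)$.

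Finally, identify the $H_n$-sub-hypermap carried by the hyperedges $\{e_2,\ldots,e_n\}$ (after suppressing the now-extraneous column $C_1$) with $H_{n-1}$. Under this identification one checks that
\[
c_{H_n}(A) + c_{H_n}(A'') = c_{H_{n-1}}(A) + c_{H_{n-1}}(A'') + 2,
\]
the surplus $+2$ accounting for the two isolated vertices of $C_1$ sitting on whichever side omits $e_2$. Substituting converts the residual sum into $\partial_{\varepsilon_{H_{n-1}}}(z)$, delivering the desired recursion. The main obstacle is the case analysis itself: one must carefully track how $c(A)$ and $c(A^c)$ shift as $e_1$ is toggled, and then cleanly reconcile component counts between $H_n$ and $H_{n-1}$. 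The base case $n = 2$ is a routine direct enumeration: the four subsets of $E(H_2)$ give Euler-genus values $0, 2, 2, 0$, confirming $\partial_{\varepsilon_{H_2}}(z) = 2 + 2z^2$, and the induction concludes.
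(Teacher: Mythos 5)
Your proof is correct, but it takes a genuinely different route from the paper's. The paper proves the theorem as an application of its bar-amalgamation machinery: writing $H_{n}=H_{n-2}\oplus_e e_{n}$, it invokes the partial-dual polynomial formula for bar-amalgamations (Theorem~\ref{th6}), analyzes the face-counts $k_{1},k_{2}\in\{1,2\}$ according to whether $e_{n-2}$ and $e_{n}$ lie in $A$, and arrives at the step-two recursion $\partial_{\varepsilon_{H_{n}}}(z)=(1+z^{2})^{2}\,\partial_{\varepsilon_{H_{n-2}}}(z)$, which forces separate base cases $H_{1}$ and $H_{2}$ for odd and even $n$. You instead work directly from the genus-change formula of Theorem~\ref{th3}: since $H_{n}$ and all of its spanning sub-hypermaps are planar, $\varepsilon(H_{n}^{A})=4n-2-2\bigl(c(A)+c(A^{c})\bigr)$, and your pairing of $A$ with $A\cup\{e_{1}\}$ --- tracking how toggling $e_{1}$ shifts the component counts on the two sides according to which of $A$ and $A''$ contains $e_{2}$ --- yields the uniform step-one recursion $\partial_{\varepsilon_{H_{n}}}(z)=(1+z^{2})\,\partial_{\varepsilon_{H_{n-1}}}(z)$ from the single base case $n=2$. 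I verified the case analysis, the formula $\varepsilon(H_{n}^{A})$ (using $v(H_{n})=2n-2$ and $c(H_{n})=1$), and the bookkeeping $c_{H_{n}}(A)+c_{H_{n}}(A'')=c_{H_{n-1}}(A)+c_{H_{n-1}}(A'')+2$ under the identification of $\{e_{2},\dots,e_{n}\}$ minus the column $\{x_{3},x_{4}\}$ with $H_{n-1}$; all are sound. Your argument is more elementary and self-contained --- it bypasses the bar-amalgamation theorem and the even/odd split entirely --- at the modest cost of having to justify planarity of every spanning sub-hypermap and to do the component-count reconciliation by hand, whereas the paper's route is designed to showcase the operation it has just developed.
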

\begin{proof}

For the sake of simplicity, we denote the hypermap consisting of a single edge \( e_n \) as \( e_n \). Since \( H_{n} = M_{H_{n-2}} \oplus_e e_{n} \), and given that \( \varepsilon[e_{n}^{\emptyset}] = \varepsilon[e_{n}^{\{e_{n}\}}] = 0 \), by Theorem \ref{th6}, we can conclude that:
\begin{equation}\label{eq1}
    \begin{aligned}
        \partial_{\varepsilon_{H_{n}}}&=2\sum\limits_{A\subseteq E(H_{n-2}\cup e_{n})}}z^{\varepsilon[H_{n-2}^{A\cap E(H_{n-2})}]
        +\varepsilon[e_{n}^{A\cap \{e_{n}\}}]+2(k_{1}+k_{2}-2)\\
                                                  &=2\sum\limits_{A\subseteq E(H_{n-2}\cup {e_{n}})}z^{\varepsilon[H_{n-2}^{A\cap E(H_{n-2})}]+2(k_{1}+k_{2}-2)}.
    \end{aligned}
\end{equation}

 If \( e_{n-2} \notin A \) and \( e_{n} \notin A \), then we have \( k_{1} = k_{2} = 1 \). If either \( e_{n-2} \in A \) or \( e_{n} \in A \), it follows that either \( k_{1} = 2, k_{2} = 1 \) or \( k_{1} = 1, k_{2} = 2\). If both \( e_{n-2} \in A \) and \( e_{n} \in A\), we conclude that \( k_{1}=k_{2}=2\).

According to Theorem \ref{th3}, we can deduce that \(\varepsilon(H_{n-2}^{A}) = \varepsilon(H_{n-2}^{A^{c}})\). Consequently, equation (\ref{eq1}) can be reformulated as follows:
\begin{equation}
    \begin{aligned}
    \partial_{\varepsilon_{H_{n}}}&=2\left(\sum\limits_{e_{n-2},e_{n}\notin A}z^{\varepsilon[H_{n-2}^{A\cap E(H_{n-2})}]}+2\sum\limits_{e_{n-2}\in A,e_{n}\notin A}z^{\varepsilon[H_{n-2}^{A\cap E(H_{n-2})}]+2}+\sum\limits_{e_{n-2},e_{n}\in A}z^{\varepsilon[H_{n-2}^{A\cap E(H_{n-2})}]+4}\right) \\
    \\
    &=\partial_{\varepsilon_{H_{n-2}}}(z)\cdot(1+z^{2})^{2}.\nonumber
    \end{aligned}
\end{equation}



Thus, we have
\[\partial_{\varepsilon_{H_{n}}}(z)=\begin{cases}
\partial_{\varepsilon_{H_{2}}}(z)(1+z^{2})^{2(\frac{n}{2}-1)}&\text{$n$ is even},\\
\partial_{\varepsilon_{H_{1}}}(z)(1+z^{2})^{2(\lceil\frac{n}{2}\rceil-1)}&\text{$n$ is odd}.
\end{cases}\]
Since $\partial_{\varepsilon_{H_{2}}}(z)=2(1+z^{2})$ and $\partial_{\varepsilon_{H_{1}}}(z)=2$,  the theorem follows.
\end{proof}


\vskip.51cm
\noindent Version: \printtime\quad\today\quad
\vskip.51cm
\end{document}